\definecolor{darkgreen}{RGB}{0,127,173}
\definecolor{darkblue}{RGB}{0,0,200}
\newtheorem{theorem}{Theorem}
\newtheorem{cor}[theorem]{Corollary}
\begin{document}

\newcommand{\X}{{\mathcal{X}}}
\newcommand{\cU}{{\mathcal{U}}}
\newcommand{\cI}{{\mathcal{I}}}
\newcommand{\cC}{{\mathcal{C}}}
\newcommand{\N}{{\mathcal{N}}}

\newcommand{\cM}{{\mathcal{M}}}
\newcommand{\cK}{{\mathcal{K}}}
\newcommand{\cE}{{\mathcal{E}}}
\newcommand{\cV}{{\mathcal{V}}}
\newcommand{\cW}{{\mathcal{W}}}

\newcommand{\change}{\color{blue}}   
\newcommand{\changem}{\color{red}}   
\title{Recoverable Robust Single Machine Scheduling with Polyhedral Uncertainty}

 \author[1]{Matthew Bold\footnote{corresponding author}}
 \author[2]{Marc Goerigk}
 
 \affil[1]{STOR-i Centre for Doctoral Training, Lancaster University, United Kingdom\\ m.bold1@lancaster.ac.uk}
 \affil[2]{Network and Data Science Management, University of Siegen, Germany\\ marc.goerigk@uni-siegen.de}
    
\date{}

\maketitle

\begin{abstract}
This paper considers a recoverable robust single-machine scheduling problem under polyhedral uncertainty with the objective of minimising the total flow time. In this setting, a decision-maker must determine a first-stage schedule subject to the uncertain job processing times. Then following the realisation of these processing times, they have the option to swap the positions of up to $\Delta$ disjoint pairs of jobs to obtain a second-stage schedule. 

We first formulate this scheduling problem using a general recoverable robust framework, before we examine the incremental subproblem in further detail. We prove a general result for max-weight matching problems, showing that for edge weights of a specific form, the matching polytope can be fully characterised by polynomially many constraints. We use this result to derive a matching-based compact formulation for the full problem. Further analysis of the incremental problem leads to an additional assignment-based compact formulation. Computational results on budgeted uncertainty sets compare the relative strengths of the three compact models we propose.
\end{abstract}

\noindent\textbf{Keywords:} scheduling; robust optimization; recoverable robustness; polyhedral uncertainty; budgeted uncertainty


\section{Introduction}

We consider a scheduling problem where $n$ jobs must be scheduled on a single machine without preemption, such that the total flow time, i.e. the sum of completion times, is minimised. This problem is denoted as $1||\sum C_i$ under the $\alpha|\beta|\gamma$ scheduling problem notation introduced by \cite{graham1979optimization}. In practice, job processing times are often subject to uncertainty, and when this is the case it is important to find robust solutions that account for this uncertainty. In this paper, we propose a recoverable robust approach \citep{liebchen2009concept} to this uncertain single machine scheduling problem. In this recoverable robust setting, we determine a full solution in a first-stage, before an adversarial player chooses a worst-case scenario of processing times from an uncertainty set, and then in response to this, we allow the first-stage solution to be adjusted in a limited way.

The deterministic single machine scheduling problem (SMSP) is one of the simplest and most studied scheduling problems, and can be solved easily in $O(n\log n)$ time by ordering the jobs according to non-decreasing processing times, i.e. by using the shortest processing time (SPT) rule. However, despite the simplicity of the nominal problem, the robust problem has been shown to be NP-hard for even the most basic uncertainty sets \citep{daniels1995robust}. 

In fact, the majority of research to date regarding robust single machine scheduling has been concerned with the presentation of complexity results for a number of different SMSPs. First discussed by \cite{daniels1995robust}, \cite{kouvelis1997robust} and \cite{yang2002robust}, these papers study the problem with the total flow time objective, and show that it is NP-hard even in the case of two discrete scenarios, for min-max, regret and relative regret robustness. Robust single machine scheduling for discrete uncertain scenarios has been examined extensively. \cite{aloulou2008complexity} present algorithmic and complexity results for a number of different SMSPs under min-max robustness. \cite{aissi2011minimizing} show that the problem of minimising the number of late jobs in the worst-case scenario, where processing times are known, but due dates are uncertain is NP-hard. \cite{zhao2010family} consider the objective of minimising the weighted sum of completion times in the worst-case scenario, and propose a cutting-plane algorithm to solve the problem. \cite{mastrolilli2013single} study this same problem and show that no polynomial-time approximation scheme exist for the unweighted version. \cite{kasperski2016single} apply the ordered weighted averaging (OWA) criterion, of which classical robustness is a special case, to a number of different SMSPs under discrete uncertainty. The consideration of SMSPs under novel optimality criteria has been continued most recently by \cite{kasperski2019risk}, where a number of complexity results are presented for the SMSP with the value at risk (VaR) and conditional value at risk (CVaR) criteria.

Robust single machine scheduling in the context of interval uncertainty has also received considerable attention. \cite{daniels1995robust} address interval uncertainty, and describe some dominance relations between the jobs in an optimal schedule based on their processing time intervals. \cite{kasperski2005minimizing} considers an SMSP with precedence constraints, and where the regret of the maximum lateness of a job is to be minimised. A polynomial-time algorithm is presented. \cite{lebedev2006complexity} show that the SMSP with the total flow time objective is NP-hard in the case of regret robustness. \cite{montemanni2007mixed} present a mixed-integer program (MIP) for this same problem, and use it to solve instances involving up to 45 jobs. \cite{kasperski20082} also consider this problem, and show that it is 2-approximable when the corresponding deterministic problem is polynomially solvable. \cite{lu2012robust} present an SMSP with uncertain job processing and setup times, show this problem is NP-hard, and design a simulated annealing-based algorithm to solve larger instances. \cite{chang2017distributionally} apply distributional robustness to an SMSP, and make use of information about the mean and covariance of the job processing times to minimise the worst-case CVaR. Most recently, \cite{fridman2020minimizing} consider an SMSP with uncertain job processing times and develop polynomial algorithms for solving the min-max regret problem under certain classes of cost functions. For a survey of robust single-machine scheduling in the context of both discrete and interval uncertainty, see \cite{kasperski2014minmax}. 

A criticism of classical robustness is that the solutions it provides are overly conservative and hedge against extreme worst-case scenarios that are very unlikely to occur in practice. To reduce the level of conservatism, a restriction to interval uncertainty was introduced by \cite{bertsimas2004price}, known as budgeted uncertainty, in which the number of jobs that can simultaneously achieve their worst-case processing times is restricted. Budgeted uncertainty is a special case of the general compact polyhedral uncertainty that is considered in this paper. Robust single machine scheduling under budgeted uncertainty was first considered by \cite{lu2014robust}, who present an MIP and heuristic to solve the problem. Following this, \cite{tadayon2015algorithms} study different versions of the min-max robust SMSP under three different uncertainty sets, including a budgeted uncertainty set. Recently, \cite{bougeret2019robust} present complexity results and approximation algorithms for a number of different min-max robust scheduling problems under budgeted uncertainty.


To the best of our knowledge, this paper is the first to solve a single-machine scheduling problem in a recoverable robust setting. However, recoverable robustness has had recent application to a number of closely related matching, assignment and scheduling problems. \cite{fischer2020investigation} consider a recoverable robust assignment problem, in which two perfect matchings of minimum costs must be chosen, subject to these matchings having at least $k$ edges in common. If the cost of the second matching is evaluated in the worst-case scenario, we arrive in the setting of recoverable robustness with interval uncertainty. Hardness results are presented, and a polynomial-time algorithm is developed for the restricted case in which one cost function is Monge. Recently, \cite{bold2021recoverable} also considered recoverable robust scheduling problems under interval uncertainty, deriving a 2-approximation algorithm for their setting.

Regarding project scheduling, \cite{bendotti2019anchor} introduce the so-called anchor-robust project scheduling problem in which a baseline schedule is designed under the problem uncertainty, such that the largest possible subset of jobs have their starting times unchanged following the realisation of the activity processing times. This problem is shown to be NP-hard even for budgeted uncertainty. In a series of papers \cite{bruni2017adjustable,bruni2018computational,bold2020compact}, a two-stage resource-constrained project scheduling problem with budgeted uncertainty is introduced and solved.

The contributions of this paper are as follows. In Section~\ref{sec:definition} we formally define the recoverable robust scheduling problem that we consider in this paper. In Section~\ref{sec:general} we present a general result that enables the construction of compact formulations for a wide range of recoverable robust problems, and apply this in the context the scheduling problem at hand. We then analyse the stages of the recoverable robust scheduling problem in detail and show that the incremental problem can be solved using a simple linear programming formulation in Section~\ref{sec:complexity}. To this end, we prove a general result for max-weight matching problems, arguing that odd-cycle constraints are not required in problems with weights of a specific form. This formulation of the incremental problem then leads to an alternative matching-based compact problem formulation. Additionally, we transfer the matching result to an assignment-based formulation for the incremental problem, which results in a third compact model. In Section~\ref{section:computational_experiments}, computational experiments are presented, showing the benefits of a recourse action, the effects of the uncertainty on the model, and the strength of the assignment-based formulation. Finally, some concluding remarks and potential directions for future research are given in Section~\ref{section:conclusions}.

\section{Problem definition}
\label{sec:definition}

We consider a single machine scheduling problem with the objective of minimising the sum of completion times. Given a set of jobs $\mathcal{N}=\{1,\dots,n\}$ with processing times $\bm{p}=(p_1,\dots,p_n)$, we aim to find a schedule, i.e. an ordering of the jobs $i\in \mathcal{N}$, that minimises the sum of completion times. This nominal problem is denoted by $1||\sum C_i$ under the $\alpha|\beta|\gamma$ scheduling problem notation introduced by \cite{graham1979optimization}. Recall that this problem is easy to solve; the shortest processing time (SPT) rule of sorting jobs by non-decreasing processing times results in an optimal schedule. This problem can be modelled as the following assignment problem with non-general costs:
\begin{align}
	\min\, &\sum_{i\in \mathcal{N}}\sum_{j\in \N} p_i(n+1-j)x_{ij}\\
\text{s.t. } &\sum_{i\in \N} x_{ij} = 1 & \forall j \in \N \label{assignment1}\\
	     & \sum_{j\in \N} x_{ij} = 1 & \forall i \in \N \label{assignment2}\\
	     & x_{ij}\in\{0,1\} & \forall i,\,j \in \N,
\end{align}
where $x_{ij}=1$ if job $i$ is scheduled in position $j$, and $x_{ij}=0$ otherwise.

We assume the job processing times $p_i,\,i\in \N$ are uncertain, but are known to lie within a given uncertainty set $\cU$. In this paper, we consider a general polyhedral uncertainty set given by
	$$\mathcal{U}=\left\{\bm{p}\in \mathbb{R}^n_+:\bm{A}\bm{p}\leq \bm{b}\right\},$$
where $\bm{A}\in\mathbb{R}^{M\times n}$ and $\bm{b}\in\mathbb{R}^M$, consisting of $M$ linear constraints $a_{m1}p_1+a_{m2}p_2+\dots+a_{mn}p_n\leq b_m$ for $m\in\mathcal{M}=\{1,\dots,M\}$ on the set of possible processing times $\bm{p}$. Throughout this paper, we assume $\cU$ to be compact. It is also possible to include auxiliary variables in the definition of $\cU$; for ease of presentation, such variables have been omitted.

We consider this uncertain single machine scheduling problem in the context of a two-stage decision process, where, having decided on a first-stage schedule $\bm{x}$ under the problem uncertainty, the decision-maker is given the opportunity to react to the realisation of the uncertain data by choosing up to $\Delta$ distinct pairs of jobs and swapping their positions, to obtain a second-stage schedule $\bm{y}$.

This recoverable robust problem can be written as follows:
\begin{align*}
	&\min_{\bm{x}\in\X} \max_{\bm{p}\in\cU} \min_{\bm{y}\in \X(\bm{x})} \sum_{i\in \N} \sum_{j\in \N} p_i(n+1-j)y_{ij},&\textnormal{(RRS)} 
\end{align*}
where $\X=\{\bm{x}\in\{0,1\}^{n\times n}: \eqref{assignment1}, \eqref{assignment2}\}$ is the set of feasible schedules, and $\X(\bm{x})\subseteq\X$ is the set of feasible second-stage assignments given $\bm{x}$. That is,
$$\X(\bm{x}) = \{ \bm{y} \in \X : d(\bm{x},\bm{y}) \le \Delta \},$$
where $d(\bm{x},\bm{y})$ is some measure of the distance between the first and second-stage schedules. 

In this paper, we restrict our attention to the case where the recourse action consist of disjoint pairwise swaps to the first-stage positions of the jobs. This choice of recourse can be motivated with an example. Consider a nuclear storage silo full of ageing containers of untreated nuclear waste that each must undergo a reprocessing procedure to treat the waste and place it into new, safe long-term storage containers. This process is scheduled in advance of its execution and its management is allocated among a number of different store managers. For many of the old waste containers, it is unclear which grade of waste they contain and therefore how extensive their reprocessing procedure will be. After the creation of the preliminary schedule, the nuclear material in each container is examined in detail and the reprocessing time estimates are improved. Having obtained these improved estimates, the reprocessing schedule can be updated. If the tasks that a store manager is responsible for change in the updated schedule, they must meet with the manager that was previously responsible for those tasks in order to be briefed about their technical requirements. To simplify the arrangements of such meetings so that store managers only have to swap briefings with a single other store manager, we restrict schedule updates to disjoint pairwise swaps of waste processing jobs.

In addition to this motivation, the restriction to disjoint pairwise swaps improves the tractability of the problem and leads to the results that we present and analyse in this paper.

Hence, in this case we define $d(\bm{x},\bm{y})$ to be the minimum number of pairwise distinct swaps required to transform $\bm{x}$ into $\bm{y}$, if this number exists; otherwise, we set it to $\infty$. Observe that the value $d(\bm{x},\bm{y})$ can be calculated using the following approach. Let $\mathcal{E}_{\bm{x}}$ be the edges chosen by $\bm{x}$ in the corresponding bipartite graph, oriented towards the right, and let $\mathcal{E}_{\bm{y}}$ be the edges chosen by $\bm{y}$, oriented towards the left, i.e. $(j,i)\in \mathcal{E}_{\bm{y}}$ corresponds to assigning job $i$ to position $j$. If and only if the edges $\mathcal{E}_{\bm{x}} \cup \mathcal{E}_{\bm{y}}$ decompose into 2-cycles and 4-cycles, we have $d(\bm{x},\bm{y}) < \infty$, in which case $d(\bm{x},\bm{y})$ is equal to the number of 4-cycles. This is because a 2-cycle corresponds to a job with an unchanged position, whilst 4-cycles represent a swap of positions of two jobs. An example is given in Figure \ref{fig:distance_ex}.

\begin{figure}[h]
	\centering
	\includegraphics[scale=0.65]{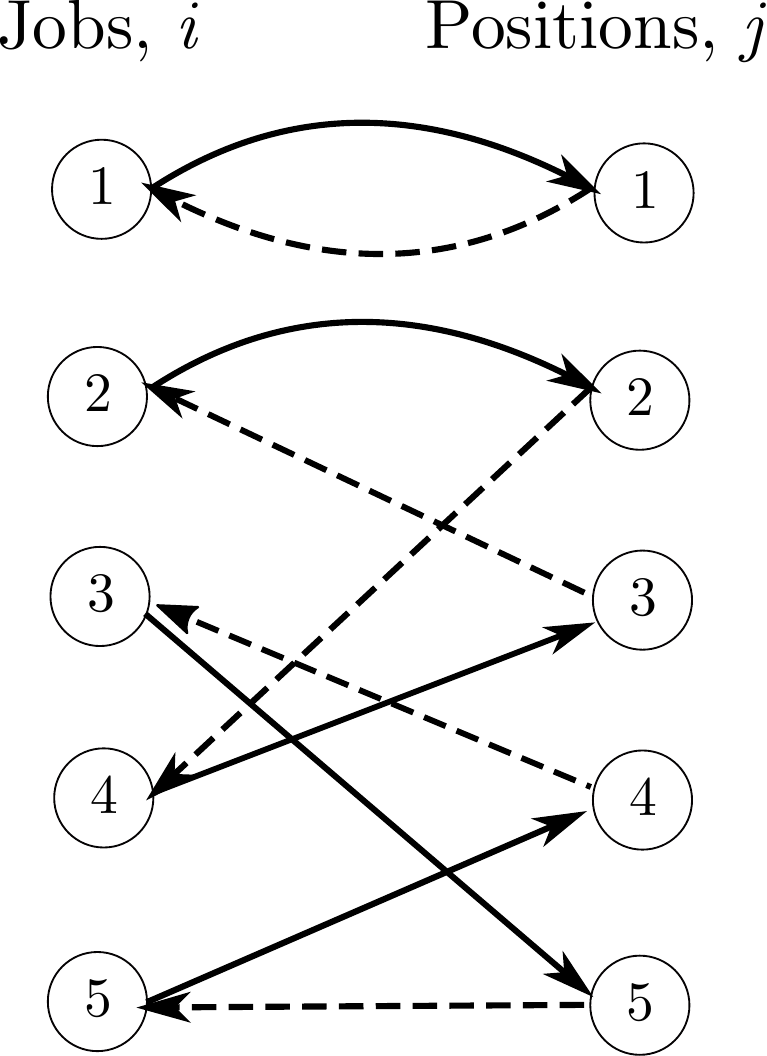}
	\caption{An example first and second-stage solution. The first-stage assignment is given by the solid arcs oriented towards the right, and corresponds to the schedule (1,2,4,5,3). The second-stage assignment is given by the dashed arcs oriented towards the left, and corresponds to the schedule (1,4,2,3,5). There are two 4-cycles corresponding to the switching of positions of jobs 2 and 4, and 3 and 5. Hence $d(\bm{x},\bm{y})=2$.}
	\label{fig:distance_ex}
\end{figure}

We define the adversarial and incremental problems of (RRS) as follows. Given both a first-stage solution $\bm{x}\in\X$ and a scenario $\bm{p}\in\cU$, the incremental problem consists of finding the best possible second-stage solution $\bm{y}\in \X(\bm{x})$. That is,
\[ \text{Inc}(\bm{x},\bm{p}) = \min_{\bm{y}\in\X(\bm{x})} \sum_{i\in \N} \sum_{j\in \N} p_i(n+1-j)y_{ij}. \]
The adversarial problem is to find a worst-case scenario $\bm{p}\in\cU$ for a given first-stage schedule $\bm{x}\in \X$. That is,
\[ \text{Adv}(\bm{x}) = \max_{\bm{p}\in\cU} \min_{\bm{y}\in\X(\bm{x})} \sum_{i\in \N} \sum_{j\in \N} p_i(n+1-j)y_{ij} = \max_{\bm{p}\in\cU} \text{Inc}(\bm{x},\bm{p}). \]

Observe that for the case of general polyhedral uncertainty that we consider here, (RSS) is NP-hard. To see this, suppose that $\Delta=0$, i.e. there is no recovery option and the second-stage variables are fixed to the corresponding first-stage values. Then the problem reduces to a standard robust single machine scheduling problem of the form
$$\min_{\bm{x} \in \X}\max_{\bm{p}\in \mathcal{U}}\sum_{i\in \N}\sum_{j\in \N}p_i(n+1-j)x_{ij}.$$
Note that a general polyhedral uncertainty set can be used to construct a problem involving only two discrete scenarios. This can be done by simply defining the uncertainty set to be the linear combination of the two discrete points, since the worst-case scenario must lie at a vertex of the polyhedron, i.e. at one of the two discrete scenarios. Since the robust scheduling problem with two scenarios is already NP-hard (see \cite{kouvelis1997robust}), this hardness result also extends to our setting.

Finally, note that in problem (RRS) we aim to minimise the worst-case costs of the resulting recovery solutions. If the first-stage costs are also relevant, all the results presented in this paper can be adjusted trivially.

\section{A general model for recoverable robustness}
\label{sec:general}

In this section we present a general model for recoverable robust optimisation problems, and apply this method to the uncertain single machine scheduling problem (RRS). Our approach is to determine a first-stage solution $\bm{x}\in\X$ as well as a finite set of candidate recovery solutions $\bm{y}^1,\ldots,\bm{y}^K\in\X(\bm{x})$. 

The following result shows that $K=n^2+1$ is sufficient to guarantee that this approach provides an exact solution to the problem.

\begin{theorem}
Let a recoverable robust problem of the form
\[ \min_{\bm{x}\in\X} \max_{\bm{c}\in\cU} \min_{\bm{y}\in\X(\bm{x})} f(\bm{y},\bm{c}) \]
be given, where $\X,\X(\bm{x})\subseteq\{0,1\}^n$, $\cU$ is a compact convex set, $f$ is linear in $\bm{y}$, and concave in $\bm{c}$. Then this problem is equivalent to
\[ \min_{\bm{x}\in\X,\atop \bm{y}^{(1)},\ldots,\bm{y}^{(n+1)}\in\X(\bm{x})} \max_{\bm{c}\in\cU} \min_{i=1,\ldots,n+1} f(\bm{y}^{(i)},\bm{c}).\]
\end{theorem}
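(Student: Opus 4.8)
The claim is that replacing the single inner minimization $\min_{\bm{y}\in\X(\bm{x})} f(\bm{y},\bm{c})$ by a fixed finite list of $n+1$ candidate recovery solutions, and then taking a pointwise minimum over that list, does not change the optimal value. I need to show both inequalities. One direction is immediate: any choice of $n+1$ candidates $\bm{y}^{(1)},\ldots,\bm{y}^{(n+1)}\in\X(\bm{x})$ gives, for each $\bm{c}$, $\min_{i} f(\bm{y}^{(i)},\bm{c}) \ge \min_{\bm{y}\in\X(\bm{x})} f(\bm{y},\bm{c})$, so the candidate-list problem has value at least the original. Wait — I should be careful about the direction of the outer $\max$: taking $\max_{\bm{c}}$ of both sides preserves the inequality, and then $\min_{\bm{x}}$ over the same $\X$ (with candidates ranging over $\X(\bm{x})$) preserves it too, so the candidate-list value is $\ge$ the RRS value. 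The real content is the reverse inequality: for a fixed optimal first-stage $\bm{x}$, there exist $n+1$ recovery solutions whose pointwise-min lower envelope is, after the adversary's maximization, no worse than $\max_{\bm{c}} \min_{\bm{y}\in\X(\bm{x})} f(\bm{y},\bm{c})$.

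**The key step: a minimax/dimension argument.**

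Fix $\bm{x}$ and write $g(\bm{c}) = \min_{\bm{y}\in\X(\bm{x})} f(\bm{y},\bm{c})$, the lower envelope of the finitely many affine-in-$\bm{c}$ functions $\{f(\bm{y},\cdot) : \bm{y}\in\X(\bm{x})\}$ — hence $g$ is concave and upper semicontinuous, and on the compact convex $\cU$ it attains its max at some $\bm{c}^*$. The plan is to exhibit $n+1$ elements of $\X(\bm{x})$ that already "certify" the value $g(\bm{c}^*)$ everywhere on $\cU$ in the relevant sense. The natural route is Carathéodory / LP-duality: consider the value $v^* = \max_{\bm{c}\in\cU} g(\bm{c})$, and observe this equals the optimal value of
\[ \max_{\bm{c}\in\cU}\ \min_{\bm{y}\in\X(\bm{x})} f(\bm{y},\bm{c}) = \max_{\bm{c}\in\cU,\, t}\ \{ t : t \le f(\bm{y},\bm{c}) \ \forall \bm{y}\in\X(\bm{x})\}. \]
Since $f$ is linear in $\bm{y}\in\{0,1\}^n$, write $f(\bm{y},\bm{c}) = \langle \bm{q}(\bm{c}), \bm{y}\rangle + r(\bm{c})$; each constraint is linear in $\bm{y}$. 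The dual perspective: $\min_{\bm{y}\in\X(\bm{x})} f(\bm{y},\bm{c}) = \min_{\bm{\lambda}\in\Lambda} \langle \bm{\lambda}, (f(\bm{y},\bm{c}))_{\bm{y}\in\X(\bm{x})}\rangle$ over the simplex $\Lambda$ on $\X(\bm{x})$, and by Carathéodory every point of $\Lambda$ is a convex combination of at most $n+1$ vertices because... hmm, that bound is $|\X(\bm{x})|$-sized, not $n+1$. The right statement is that $\min_{\bm{y}\in\X(\bm{x})} f(\bm{y},\bm{c})$, as a function parametrized by the vector $\bm{c}$ living effectively through the $n$-dimensional data $\bm{q}(\bm{c})$ (plus the scalar $r$), has the property that its maximum over $\cU$ is achieved by a convex combination of at most $n+1$ of the functions $f(\bm{y}^{(i)},\cdot)$ — this is exactly the LP/Helly-type fact that in $\mathbb{R}^{n+1}$ (variables $(\bm{q},r)$ or after fixing things, $\bm{c}$-image is lower-dimensional) an optimal dual solution to the semi-infinite LP is supported on $n+1$ constraints.

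**Making it precise — the obstacle.**

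The main obstacle is pinning down the exact $n+1$ bound rather than a larger one, and handling the $\max_\bm{c}$/$\min$ interchange cleanly. I would formalize it as follows: by Sion's minimax theorem applied to $f$ (linear hence concave-convex appropriately) on $\cU \times \mathrm{conv}(\X(\bm{x}))$, $v^* = \max_{\bm{c}\in\cU}\min_{\bm{y}\in\X(\bm{x})} f = \min_{\bm{y}\in\mathrm{conv}\,\X(\bm{x})}\max_{\bm{c}\in\cU} f$. Let $\bm{y}^*$ attain the outer min; since $\bm{y}^*\in\mathrm{conv}(\X(\bm{x}))\subseteq\mathbb{R}^n$, by Carathéodory's theorem $\bm{y}^* = \sum_{i=1}^{n+1}\mu_i \bm{y}^{(i)}$ with $\bm{y}^{(i)}\in\X(\bm{x})$, $\bm{\mu}$ a probability vector. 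Then for every $\bm{c}\in\cU$, by linearity of $f$ in $\bm{y}$,
\[ \min_{i=1,\ldots,n+1} f(\bm{y}^{(i)},\bm{c}) \le \sum_{i=1}^{n+1}\mu_i f(\bm{y}^{(i)},\bm{c}) = f(\bm{y}^*,\bm{c}) \le \max_{\bm{c}'\in\cU} f(\bm{y}^*,\bm{c}') = v^*, \]
and taking $\max_{\bm{c}\in\cU}$ on the far left keeps it $\le v^*$. Thus the candidate-list problem, restricted to this $\bm{x}$ and these candidates, has value $\le v^*$; minimizing over $\bm{x}$ shows the candidate-list optimum is $\le$ RRS optimum. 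Combined with the easy reverse inequality, equality follows. The delicate points to check are: (i) $\mathrm{conv}(\X(\bm{x}))$ is a polytope in $\mathbb{R}^n$ so Carathéodory gives exactly $n+1$ points; (ii) the minimax theorem hypotheses hold — $\cU$ compact convex, $f(\cdot,\bm{c})$ concave (given), $f(\bm{y},\cdot)$ linear hence convex in $\bm{y}$, and finiteness/continuity from compactness; (iii) if $\X(\bm{x})=\emptyset$ the statement is vacuous or handled by convention. I expect step (ii), invoking Sion correctly with the concavity-in-$\bm{c}$ hypothesis, to be the part needing the most care.
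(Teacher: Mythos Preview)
Your proposal is correct and follows essentially the same approach as the paper: both arguments hinge on Sion's minimax theorem to swap $\max_{\bm{c}\in\cU}$ with $\min_{\bm{y}\in\mathrm{conv}(\X(\bm{x}))}$, followed by Carath\'eodory's theorem in $\mathbb{R}^n$ to replace the optimal mixed recovery $\bm{y}^*$ by a convex combination of at most $n+1$ points of $\X(\bm{x})$, together with linearity of $f$ in $\bm{y}$. The only cosmetic difference is that the paper presents a chain of equalities (applying minimax a second time, to the simplex of mixing weights $\bm{\lambda}$, to get back to $\max_{\bm{c}}\min_i$), whereas you prove the two inequalities separately and use the elementary bound $\min_i f(\bm{y}^{(i)},\bm{c})\le \sum_i \mu_i f(\bm{y}^{(i)},\bm{c})$ directly---this saves one invocation of minimax but is otherwise the same argument.
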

\begin{proof}
	The idea of the proof is similar to models developed for $K$-adaptability (see \citet[Theorem~1]{hanasusanto2015k} and \citet[Corollary~1]{buchheim2017min}). Recall both Carath\'eodory's theorem and the mini\-max theorem. Carath\'eodory's theorem states that any point $x\in \mathbb{R}^n$ lying in $conv(X)$ can be written as a convex combination of $n+1$ points from $X$. The mini\-max theorem states that if $X$ and $Y$ are two compact, convex sets, and $f:X\times Y\rightarrow\mathbb{R}$ is a continuous compact-concave function (i.e. $f(\cdot,y)$ is concave for fixed values of $y$ and $f(x,\cdot)$ is convex for fixed values of $x$), then $$\max_{x}\min_y f(x,y)=\min_y \max_x f(x,y).$$ We make use of both of these results in the following:
\begin{align*}
 &\min_{\bm{x}\in\X} \max_{\bm{c}\in\cU} \min_{\bm{y}\in\X(\bm{x})} f(\bm{y},\bm{c}) \\
 =& \min_{\bm{x}\in\X} \max_{\bm{c}\in\cU} \min_{\bm{y}\in conv(\X(\bm{x}))} f(\bm{y},\bm{c}) \\
 =& \min_{\bm{x}\in\X} \min_{\bm{y}\in conv(\X(\bm{x}))} \max_{\bm{c}\in\cU}  f(\bm{y},\bm{c})& \textnormal{(by the minimax theorem)}\\
 =& \min_{\bm{x}\in\X} \min_{\bm{y}^{(1)},\ldots,\bm{y}^{(n+1)}\in\X(\bm{x})}
 \min_{\lambda^1,\ldots,\lambda^{n+1}\ge0 \atop \sum_{i=1}^{n+1}\lambda^i = 1} \max_{\bm{c}\in\cU}f( \sum_{i=1}^{n+1}\lambda^i\bm{y}^{(i)},\bm{c}) & \textnormal{(by Carath\'eodory's theorem)}\\
 =& \min_{\bm{x}\in\X} \min_{\bm{y}^{(1)},\ldots,\bm{y}^{(n+1)}\in\X(\bm{x})}
  \max_{\bm{c}\in\cU} \min_{\lambda^1,\ldots,\lambda^{n+1}\ge0 \atop \sum_{i=1}^{n+1} \lambda^i = 1}  f(\sum_{i=1}^{n+1}\lambda^i\bm{y}^{(i)},\bm{c}) & \textnormal{(by the minimax theorem)}\\
 =& \min_{\bm{x}\in\X} \min_{\bm{y}^{(1)},\ldots,\bm{y}^{(n+1)}\in\X(\bm{x})}
  \max_{\bm{c}\in\cU} \min_{\lambda^1,\ldots,\lambda^{n+1}\ge0 \atop \sum_{i=1}^{n+1} \lambda^i = 1}  \sum_{i=1}^{n+1} \lambda^i f(\bm{y}^{(i)},\bm{c}) \\
=& \min_{\bm{x}\in\X,\atop \bm{y}^{(1)},\ldots,\bm{y}^{(n+1)}\in\X(\bm{x})} \max_{\bm{c}\in\cU} \min_{i=1,\ldots,n+1} f(\bm{y}^{(i)},\bm{c}).
\end{align*}
\end{proof}

This approach can be used to derive a compact formulation to the uncertain single machine scheduling problem (RRS). To this end, we first consider the inner selection problem, given a first-stage solution $\bm{x}$ and set of recovery solutions $\bm{y}^1,\dots,\bm{y}^K$, and a scenario $\bm{p}$. This is given by 
\begin{align*}
	\min\ &\sum_{k\in\cK} \left( \sum_{i\in \N} \sum_{j\in \N} p_i(n+1-j) y^k_{ij} \right) \lambda_k \\
\text{s.t. } & \sum_{k\in\cK} \lambda_k = 1 \\
	     & \lambda_k \ge 0 & \forall k\in\cK,
\end{align*}
where $\cK = \{1,\ldots,K\}$. The problem of finding the worst-case scenario $\bm{p}\in\mathcal{U}$ for the choice of first-stage solution $\bm{x}$ and recovery solutions $\bm{y}^1,\dots, \bm{y}^K$ is therefore:
\begin{align*}
	\max\ & t \\
\text{s.t. } & t \le  \sum_{i\in \N} \sum_{j\in \N} p_i(n+1-j) y^k_{ij} & \forall k\in\cK \\
	     & \sum_{i\in \N} a_{mi} p_i \le b_m & \forall m\in \cM\\
& p_i \ge 0 & \forall i\in \N.
\end{align*}
Dualising this problem then gives the following formulation for (RRS):
\begin{align}
	\min\ & \sum_{m\in\cM} b_m q_m \label{general_formulation:first}\\
\text{s.t. } & \sum_{k\in\cK} \mu_k = 1 \\
	     & \sum_{m\in\cM} a_{m i} q_m \ge \sum_{k\in\cK} \left(\sum_{j\in \N}(n+1-j)y^k_{ij}\right) \mu_k & \forall i\in \N \\
	& d(\bm{x},\bm{y}^k) \le \Delta & \forall k\in\cK \\
& \bm{x}\in\X \\
& \bm{y}^k \in \X & \forall k\in\cK\\
& \mu_k \geq 0 & \forall k\in \cK\\
& q_m \geq 0 & \forall m \in \cM\label{general_formulation:last},
\end{align}
where $d(\bm{x},\bm{y}^k)$ is some measure of distance between the first-stage solution and the $k$-th recovery solution. Note that this model is not restricted to any particular choice of distance measure $d(\bm{x},\bm{y}^k)$. 

However, if we opt to calculate the distance between two schedules as the minimum number of disjoint pairwise swaps required to transform one schedule into the other, this can be modelled as follows. Let $z^k_{ii^{'}}$ denote the whether or not jobs $i$ and $i^{'}$ have swapped positions in recovery solution $\bm{y}^k$, relative to the first-stage schedule $\bm{x}$. In this case, we have that
$$y^k_{ij} = \sum_{i^{'}\in \N} z^k_{ii'} x_{i' j}.$$
Hence, $\bm{y}^k$ can be removed from the model, and replaced by $\bm{z}^k$ with the inclusion of the following constraints:
\begin{align*}
& \sum_{i^{'}\in \N} z^k_{ii^{'}} = 1 & \forall i\in \N, \, k\in\cK \\
& \sum_{i\in \N} z^k_{ii^{'}} = 1 & \forall i^{'}\in \N, \, k\in\cK  \\
& z^k_{ii^{'}} = z^k_{i^{'}i} & \forall i,i^{'}\in \N, \, k\in\cK  \\
& \sum_{i\in \N} z^k_{ii} \ge n - 2\Delta & \forall k\in\cK.
\end{align*}
To arrive at a mixed-integer linear program, the products $z^k_{ii'}\cdot x_{i' j}\cdot \mu_k$ need to be linearised using standard techniques. The full linearised formulation contains $O(n^3K)$ constraints and variables and is shown in Appendix~\ref{sec:model1}.

\section{Complexity of subproblems and compact formulations}
\label{sec:complexity}

In this section, we examine the incremental and adversarial problems of (RRS) in more detail and subsequently derive two additional compact formulations. 

\subsection{Matching-based formulation}\label{subsection:matching_formulation}

We first consider a matching-based formulation for the incremental problem. For the ease of presentation, we assume for now that $x_{ii} = 1$ for all $i\in \N$, i.e. the first-stage solution is a horizontal matching. Note a change in notation for this section where now the indices $i$ and $j$ are both used to refer to jobs, and $\ell$ denotes a position in the schedule. Supposing that the positions of jobs $i$ and $j$ are switched in the recovery schedule, the reduction in cost of making this switch is given by
$$p_i(n+1-i)+p_j(n+1-j)-p_i(n+1-j)-p_j(n+1-i)=(p_i-p_j)(j-i).$$

Letting $z_{ij}$ indicate whether or not jobs $i$ and $j$ swap positions in the schedule, the incremental problem can be formulated as:

\begin{align}
\min\ & \sum_{i\in \N}p_i(n+1-i)-\sum_{e=\{i,j\}\in \cE}(p_i-p_j)(j-i)z_e\label{matchingincr1}\\
\textnormal{s.t. }&\sum_{e\in\delta(i)}z_e \leq 1 &\forall i \in \N\\
&\sum_{e\in \cE}z_e \leq \Delta\\
&z_e\in\{0,1\}&\forall e\in \cE\label{matchingincr4},
\end{align}
where $\cE=\{\{i,j\}:i,j\in \N,\,i\neq j\}$ is the set of unique swaps, and $\delta(i)$ is the set of edges incident to vertex $i$. This is a cardinality-constrained matching problem on a complete graph with one node for each job $i\in \N$.


We examine this matching-based formulation in further detail. First, consider the maximum weight matching problem on a general graph $G=(\cV,\cE)$. This problem can be formulated as the following linear program:
\begin{align}
	\max & \sum_{e\in \cE} w_{e} x_{e} \label{matching1} \\
	\text{s.t. } & \sum_{e\in \delta(i)} x_{e} \leq 1 & \forall i \in \cV\label{matching2}\\
		     & \sum_{e\in \cE(\cW)}x_{e} \leq \frac{|\cW|-1}{2} & \forall \cW\subseteq \cV,\, |\cW| \textnormal{ odd}\label{matching3}\\
		     & x_{e} \geq 0 & \forall e \in \cE, \label{matching4}
\end{align}
where $\cE(\cW)$ is the set of edges in the subgraph induced on $\cW$. \cite{edmonds1965maximum} showed that constraints (\ref{matching3}), known as odd-cycle constraints or blossom constraints, are required to fully characterise the matching polytope.

In the following theorem, we show that for a matching problem with the same cost structure as \eqref{matchingincr1}, odd-cycle constraints are not required.

\begin{theorem}\label{theorem:matching}
	For any $\bm{a},\,\bm{b} \in \mathbb{R}^{|\cV|}_+$, the problem
	\begin{align}
		\max &\sum_{e=\{i,j\}\in \cE}(a_i-a_j)(b_i-b_j)x_e \label{obj:thm1}\\
		\text{s.t. } & \sum_{e\in\delta(i)} x_e \leq 1 & \forall i \in \cV\label{cons:thm1}\\
			     & x_e \geq 0 & \forall e \in \cE\label{cons:thm2}
	\end{align}
	has an optimal solution with $x_e\in \{0,1\}$ for all $e\in \cE$.
\end{theorem}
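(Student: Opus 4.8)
The plan is to show that the constraint matrix of the relaxed matching problem \eqref{cons:thm1}--\eqref{cons:thm2} behaves, on the support of any optimal solution, like a bipartite (hence totally unimodular) incidence matrix, by exploiting the very special product structure $w_{ij} = (a_i-a_j)(b_i-b_j)$ of the weights. First I would sort the vertices so that $a_1 \le a_2 \le \cdots \le a_{|\cV|}$, and argue that the weights are ``conflict-free'' in a suitable sense: if $i<j$ then $(a_i-a_j)\le 0$, so $w_{ij}\ge 0$ exactly when $(b_i-b_j)\le 0$ as well, i.e. exactly when $b_i\le b_j$. Edges with $w_{ij}<0$ can be discarded (setting $x_e=0$ is always optimal since the LP is a maximisation with upper bounds only), so it suffices to treat the graph $G'$ on $\cV$ whose edges are the pairs $\{i,j\}$, $i<j$, with $a_i\le a_j$ and $b_i\le b_j$ — i.e. pairs that are comparable in the product order induced by $(a,b)$.

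The key step is then a structural claim about an optimal basic solution: I would take an optimal vertex $\bm x^*$ of the LP and show its support contains no odd cycle, so that the only binding constraints with fractional interaction form a bipartite structure and the classical integrality of the bipartite (fractional) matching polytope applies. The mechanism I expect to use is an exchange/uncrossing argument on the chain structure: along any cycle $v_1 - v_2 - \cdots - v_{2k+1} - v_1$ in $G'$, consider the alternating perturbation $x_e \mapsto x_e \pm \varepsilon$; the change in objective is $\varepsilon$ times an alternating sum of the $w$'s, and I would show that because the edges are comparabilities in a two-dimensional order, this alternating sum has a definite sign (or can be made non-negative by choosing the orientation of the perturbation), contradicting optimality unless the cycle is not tight — hence any odd closed walk in the support can be broken without loss. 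Alternatively, and perhaps more cleanly, I would invoke Theorem in the spirit of ``weights that are a product of two monotone vectors yield a Monge-like / anti-Monge structure,'' reduce to a problem on an interval/laminar family, and observe that the natural incidence matrix of comparability pairs from a chain is an interval matrix, which is totally unimodular; then the polytope \eqref{cons:thm1}--\eqref{cons:thm2} restricted to these edges is integral and an optimal LP solution is attained at a $0/1$ point.

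The main obstacle is making the ``sign of the alternating sum around an odd cycle'' argument rigorous in full generality: the two-dimensional product order is not a total order, so a cycle in $G'$ need not live inside a single chain, and I will need to handle incomparable vertices carefully — most likely by showing that in an \emph{optimal} solution the support can be assumed to avoid such incomparabilities (e.g. any maximal set of pairwise-matched vertices can be taken to lie on a chain in the product order, since swapping to respect the common ordering of $a$ and $b$ only increases the total weight, a rearrangement-inequality type step). Once the support is confined to a chain, the interval-matrix / total-unimodularity argument finishes the proof, and one reads off that odd-cycle (blossom) constraints \eqref{matching3} are redundant for weights of this form, which is exactly what is needed to linearise the incremental problem \eqref{matchingincr1}--\eqref{matchingincr4}.
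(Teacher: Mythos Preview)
Your setup is sound: discarding negative-weight edges is fine, and you correctly observe that the surviving edges are exactly the pairs comparable in the product order on $(a,b)$. But both of your proposed mechanisms for killing odd cycles in the support have genuine gaps.

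\emph{Alternating perturbation.} The trouble is that odd cycles are precisely the obstruction to this argument. On a $1/2$-fractional odd cycle you cannot alternately perturb the edges by $\pm\varepsilon$ and remain feasible: with an odd number of edges, some vertex sees two increments of the same sign and its degree constraint is violated. This is exactly why the fractional matching polytope has half-integral vertices supported on odd cycles in the first place. So ``the alternating sum has a definite sign'' is not the relevant quantity; no feasible direction of that alternating type exists.

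\emph{Total unimodularity on a chain.} Even after your rearrangement step confines the support to a chain (a set of vertices totally ordered by both $a$ and $b$), the vertex--edge incidence matrix of the resulting graph is still that of a complete graph, which is \emph{not} totally unimodular: already for $K_3$ the polytope \eqref{cons:thm1}--\eqref{cons:thm2} has the fractional vertex $(1/2,1/2,1/2)$. The integrality you want is not coming from the constraint matrix but from the weights, so an interval/TU argument cannot close the proof.

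The paper's argument supplies the missing idea. Start from the classical fact that every vertex of the polytope \eqref{cons:thm1}--\eqref{cons:thm2} is half-integral and decomposes into an integral matching plus vertex-disjoint $1/2$-fractional odd cycles. Orient such a cycle and call an edge \emph{increasing} or \emph{decreasing} according to whether both $a$ and $b$ go up or both go down along it (these are the only options once negative-weight edges are removed). The key computation is a strict \emph{superadditivity} along a chain: if $i,j,k$ are consecutive in the cycle with the same orientation (say $a_i>a_j>a_k$ and $b_i>b_j>b_k$), then
\[
w_{ik}=(a_i-a_k)(b_i-b_k) > (a_i-a_j)(b_i-b_j)+(a_j-a_k)(b_j-b_k)=w_{ij}+w_{jk},
\]
so shortcutting the two edges $\{i,j\},\{j,k\}$ by the single edge $\{i,k\}$ (keeping value $1/2$) strictly improves the objective while remaining feasible. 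Iterating, any run of same-type edges can be collapsed; hence an \emph{optimal} $1/2$-cycle must alternate increasing/decreasing, which forces even length and contradicts oddness. Thus no $1/2$-odd cycle survives and an optimal vertex is integral.

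Note in particular that on $K_3$ with vertices on a chain, this inequality gives $w_{13}>w_{12}+w_{23}$, so the integral matching $\{1,3\}$ beats the fractional point $(1/2,1/2,1/2)$. That is the weight-driven phenomenon your proposal was reaching for; the shortcut inequality, not unimodularity or alternating perturbation, is what makes it work.
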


\begin{proof}
\citet[Theorem 30.2, page 522]{schrijver2003combinatorial} states that each vertex of the matching polytope described by (\ref{cons:thm1}) and (\ref{cons:thm2}) is half-integer, i.e. $x_e\in\{0,\frac{1}{2},1\}$ for all $e\in \cE$ in an optimal solution. Additionally, as observed by \cite{balinski1965integer}, the vertices of the matching polytope can be partitioned into a matching $\mathcal{P}$, where $x_e=1$ for each $e\in \mathcal{P}$, and a set of 1/2-fractional cycles of odd length, where $x_e=\frac{1}{2}$ for each $e$ in the odd cycles. Hence, we can restrict our attention only to 1/2-fractional odd cycles, and show that there is an optimal solution where such cycles do not exist.

Suppose we are given an optimal solution containing a 1/2-fractional odd cycle, consisting of edges $\cC=\{e_{i_1,i_2},\,e_{i_2,i_3},\,\dots,\,e_{i_{q-1},i_q},\,e_{i_q,i_1}\}$, with weights given by $w_{ij}=(a_i-a_j)(b_i-b_j)$. Without loss of generality, we assume an orientation in the cycle, where edges are directed as $(i_j,i_{j+1})$ for $j=1,\ldots,q$, where $i_{q+1}=i_1$.

Note that if $w_e\leq 0$ for some edge $e$, it can be removed from $\cE$, as such an edge will never be selected in an optimal matching. Hence, we may assume that $w_e> 0$ for all $e\in \cC$.
Since $w_{ij}=(a_i-a_j)(b_i-b_j)>0$ for all $e_{ij}\in \cC$, $(a_i-a_j)$ and $(b_i-b_j)$ must have the same sign. That is, either $a_i>a_j$ and $b_i>b_j$, in which case we refer to $e_{ij}$ as a \textit{decreasing edge}, or $a_i<a_j$ and $b_i<b_j$, in which case we refer to $e_{ij}$ as an \textit{increasing edge}.

We show that there is an optimal 1/2-fractional cycle that alternates between increasing and decreasing edges. Suppose that there are $p<q$ consecutive decreasing edges in $\cC$, $e_{j_1,j_2},e_{j_2,j_3},\dots,e_{j_{p-1},j_p}$, i.e. $a_{j_1}> a_{j_2}> \dots >a_{j_p}$ and $b_{j_1}> b_{j_2} > \dots > b_{j_p}$. In this case 
\begin{align*}
w_{j_1,j_p}&=(a_{j_1}-a_{j_p})(b_{j_1}-b_{j_p})\\
&=\Big((a_{j_1}-a_{j_2})+(a_{j_2}-a_{j_3})+\dots+(a_{{j_p-1}}-a_{j_p})\Big) \\
& \qquad \cdot\Big((b_{j_1}-b_{j_2})+(b_{j_2}-b_{j_3})+\dots+(b_{j_{p-1}}-b_{j_p})\Big)\\
&=w_{j_1,j_2} + w_{j_2,j_3} + \dots + w_{j_{p-1},j_p} + (a_{j_1}-a_{j_2})\Big((b_{j_2}-b_{j_3})+\dots+(b_{j_{p-1}}-b_{j_p})\Big)\\
&\hspace{4.6cm}+ (a_{j_2}-a_{j_3})\Big((b_{j_1}-b_{j_2})+\dots+(b_{j_{p-1}}-b_{j_p})\Big)\\
&\hspace{4.6cm}+\dots\\
&\hspace{4.6cm}+ (a_{j_{p-1}}-a_{j_p})\Big((b_{j_1}-b_{j_2})+\dots+(b_{j_{p-2}}-b_{j_{p-1}})\Big)\\
&> w_{j_1,j_2}+w_{j_2,j_3}+\dots+w_{j_{p-1},j_p},
\end{align*}
which means that replacing the $p$ consecutive decreasing edges in $\cC$ by the edge $e_{j_1,j_p}$ would lead to an even better objective value (see Figure~\ref{fig:proof} for an illustration). The same argument can be used to show that there also cannot be $p$ consecutive increasing edges in an optimal 1/2-fractional cycle.

We have therefore constructed an optimal 1/2-fractional cycle that strictly alternates between increasing and decreasing edges. Clearly, this is only possible if $q$ is even. Since a 1/2-fractional even cycle can be written as a convex combination of two feasible matchings, this proves that exists an optimal solution without any 1/2-fractional cycles.
\end{proof}

\begin{figure}[h]
	\centering
	\includegraphics[scale=0.75]{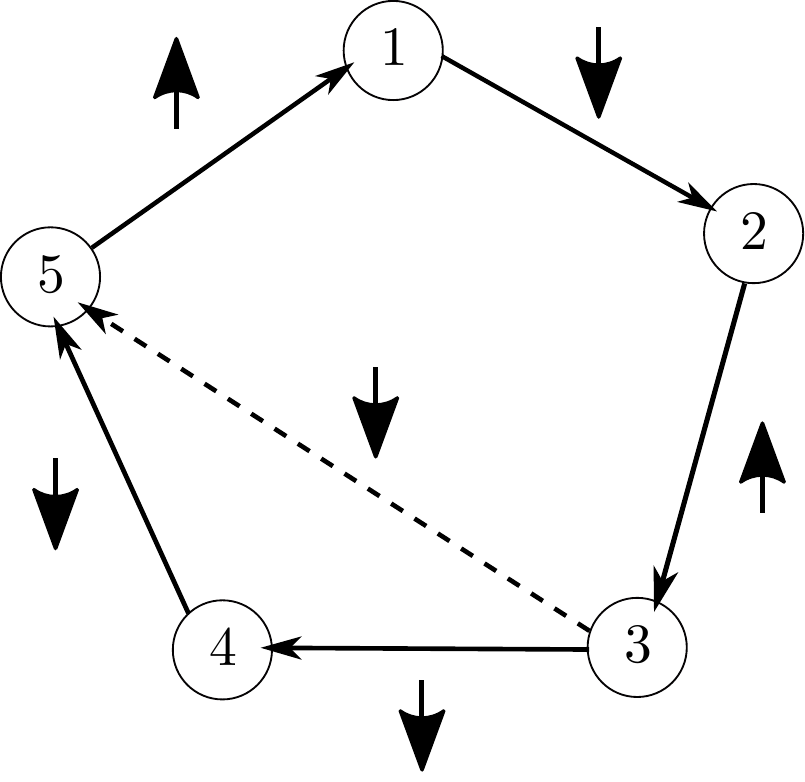}
	\caption{An example of a 1/2-fractional cycle involving $q=5$ nodes. Up and down arrows indicate increasing and decreasing edges respectively. It is optimal to replace the two consecutive decreasing edges $(3,4)$ and $(4,5)$ with the dashed edge $(3,5)$, i.e. $w_{35}>w_{34}+w_{45}$.}
	\label{fig:proof}
\end{figure}

The following result, presented in \cite{schrijver2003combinatorial} (Corollary~18.10a, page~331), states that the integrality of the vertices of the matching polytope is unaffected by the addition of a cardinality constraint.

\begin{theorem} Let $G=(\cV,\cE)$ be an undirected graph and let $k,l\in \mathbb{Z}_+$ with $k\leq l$. Then the convex hull of the incidence vectors of matchings $\mathcal{P}$ satisfying $k\leq |\mathcal{P}| \leq l$ is equal to the set of those vectors $\bm{x}$ in the matching polytope of $G$ satisfying $k\leq \bm{1}^\top \bm{x}\leq l$.
\end{theorem}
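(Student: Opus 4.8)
The plan is to prove the two inclusions separately. The inclusion ``$\subseteq$'' is immediate: any matching $\mathcal{P}$ with $k\le|\mathcal{P}|\le l$ has incidence vector $\chi^{\mathcal{P}}$ lying in the matching polytope of $G$ (by \cite{edmonds1965maximum}) and satisfies $\bm{1}^\top\chi^{\mathcal{P}}=|\mathcal{P}|\in[k,l]$, so $\chi^{\mathcal{P}}$ belongs to the right-hand set; that set is convex, being the matching polytope intersected with a slab, hence it contains the convex hull of all such $\chi^{\mathcal{P}}$. All the work is in the reverse inclusion: given $\bm{x}$ in the matching polytope with $k\le\bm{1}^\top\bm{x}\le l$, I want to exhibit $\bm{x}$ as a convex combination of incidence vectors of matchings whose cardinalities all lie in $[k,l]$.

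The key device is an exchange lemma: if $P,P'$ are matchings of $G$ with $|P'|>|P|$, then there exist matchings $Q,Q'$ with $|Q|=|P|+1$, $|Q'|=|P'|-1$ and $\chi^{Q}+\chi^{Q'}=\chi^{P}+\chi^{P'}$. To see this, consider $P\triangle P'$, which is a vertex-disjoint union of alternating paths and even alternating cycles; since $|P'|>|P|$, summing $(\#P'\text{-edges}-\#P\text{-edges})$ over components gives $|P'|-|P|>0$, so some component $\pi$ is an odd path whose two end edges lie in $P'$ and whose endpoints are uncovered by $P$, i.e. a $P$-augmenting path. Taking $Q:=P\triangle E(\pi)$ and $Q':=P'\triangle E(\pi)$ then does the job: both are again matchings, the cardinalities change as claimed, and checking edge-by-edge (on $\pi$ and off $\pi$) shows $\chi^{Q}+\chi^{Q'}=\chi^{P}+\chi^{P'}$.

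With this in hand I would proceed as follows. Since $\bm{x}$ lies in the matching polytope, write $\bm{x}=\sum_i\lambda_i\chi^{P_i}$ with matchings $P_i$, $\lambda_i>0$, $\sum_i\lambda_i=1$. Among all such representations, pick one minimising the ``total distance to $[k,l]$'', $\mu:=\sum_i\lambda_i\big((|P_i|-l)_{+}+(k-|P_i|)_{+}\big)\ge 0$. This minimum is attained: grouping representations by the (finite) subset of matchings of $G$ that they use, for each such subset the admissible coefficient vectors form a polytope on which $\mu$ is linear, so $\mu$ attains its minimum at one of finitely many vertices. Suppose, for contradiction, that this minimum is positive. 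Then some $|P_{i_0}|>l$ or some $|P_{i_0}|<k$; treat the first case (the second is symmetric). From $\sum_i\lambda_i|P_i|=\bm{1}^\top\bm{x}\le l$ and $\lambda_{i_0}>0$ it follows that some $P_{i_1}$ has $|P_{i_1}|\le l-1<|P_{i_0}|$. Apply the exchange lemma to $P_{i_0},P_{i_1}$ to obtain $Q,Q'$ with $|Q'|=|P_{i_0}|-1\ge l\ge k$ and $|Q|=|P_{i_1}|+1\le l$, and shift weight $\epsilon=\min(\lambda_{i_0},\lambda_{i_1})$ off $\{P_{i_0},P_{i_1}\}$ onto $\{Q,Q'\}$; since $\chi^{Q}+\chi^{Q'}=\chi^{P_{i_0}}+\chi^{P_{i_1}}$ this is still a convex representation of $\bm{x}$. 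The distance term of the mass moved from $P_{i_0}$ drops by exactly $1$ (it goes from $|P_{i_0}|-l$ to $|P_{i_0}|-1-l$), while the distance term of the mass moved from $P_{i_1}$ cannot increase (it goes from $(k-|P_{i_1}|)_{+}$ to $(k-|P_{i_1}|-1)_{+}$), so $\mu$ strictly decreases, contradicting minimality. Hence the minimum is $0$, every $P_i$ satisfies $|P_i|\in[k,l]$, and $\bm{x}$ is a convex combination of incidence vectors of matchings of the required sizes, as desired.

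I expect the main obstacle to be not the exchange lemma (a routine symmetric-difference argument) but guaranteeing that the iterative improvement terminates: after an exchange the number of terms in the representation may grow, so one cannot simply loop inside a fixed finite pool of representations. The ``minimise $\mu$ over \emph{all} representations and use that the minimum is attained at a vertex of one of finitely many polytopes'' formulation is precisely what sidesteps this; once that is set up, the strict-decrease sign check and the symmetric handling of the case $|P_{i_0}|<k$ are straightforward.
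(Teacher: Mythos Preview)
The paper does not prove this theorem at all; it simply quotes it as Corollary~18.10a from \cite{schrijver2003combinatorial} and uses it as a black box to pass from Theorem~\ref{theorem:matching} to Corollary~\ref{cor:ccmatching}. Your argument, by contrast, is a complete self-contained proof, and it is correct.

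The exchange lemma is exactly right: $P\triangle P'$ decomposes into alternating paths and even cycles, a counting argument forces a $P$-augmenting path $\pi$ whenever $|P'|>|P|$, and flipping both matchings along $\pi$ preserves the sum $\chi^{P}+\chi^{P'}$ while moving the two cardinalities one step towards each other. Your device of minimising the total distance $\mu$ to $[k,l]$ over \emph{all} convex representations of $\bm{x}$ is also sound, and the attainment argument works --- though it can be said more directly: since $G$ has only finitely many matchings $P_1,\dots,P_M$, the set of coefficient vectors $\bm{\lambda}\in\mathbb{R}^M_{\ge 0}$ with $\sum_i\lambda_i=1$ and $\sum_i\lambda_i\chi^{P_i}=\bm{x}$ is a single bounded polytope on which $\mu$ is a fixed linear functional, so there is no need to partition by support. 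The strict-decrease bookkeeping is correct (the mass shifted off $P_{i_0}$ loses exactly $\epsilon$ in distance, the mass shifted off $P_{i_1}$ cannot gain since $(k-t-1)_+\le(k-t)_+$), and the case $|P_{i_0}|<k$ is genuinely symmetric.

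So there is nothing in the paper to compare your approach against; you have supplied an elementary combinatorial proof where the paper supplied only a citation.
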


This result, in combination with Theorem \ref{theorem:matching}, provides us with the following corollary:
\begin{cor}\label{cor:ccmatching}
	For any $\bm{a},\,\bm{b} \in \mathbb{R}^{|\cV|}_+$, the problem
	\begin{align}
		\max &\sum_{e=\{i,j\}\in \cE}(a_i-a_j)(b_i-b_j)x_e\label{matchinc1}\\
		\text{s.t. } & \sum_{e\in\delta(i)} x_e \leq 1 & \forall i \in \cV\label{matchinc2}\\
			     & \sum_{e\in \cE} x_e\leq \Delta\label{matchinc3}\\
			     & x_e \geq 0 & \forall e \in \cE\label{matchinc4}
	\end{align}
	has an optimal solution with $x_e\in \{0,1\}$ for all $e\in \cE$.
\end{cor}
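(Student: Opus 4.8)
The plan is to show that, for the objective \eqref{matchinc1}, the odd-cycle constraints \eqref{matching3} remain redundant even after the cardinality constraint \eqref{matchinc3} is added, so that the claim follows from the preceding theorem by the same reduction already used in Theorem~\ref{theorem:matching}. Since we maximise, any edge $e=\{i,j\}$ with $w_e:=(a_i-a_j)(b_i-b_j)\le 0$ is never chosen and can be deleted, so assume $w_e>0$ for all remaining $e$. Let $P_{\mathrm{LP}}$ denote the feasible region \eqref{matchinc2}--\eqref{matchinc4} of the corollary's program, let $P_{\mathrm{FMP}}$ denote the fractional matching polytope (constraints \eqref{matchinc2} and \eqref{matchinc4}, i.e.\ $P_{\mathrm{LP}}$ with the cardinality bound removed), and let $P_{\mathrm{int}}$ denote the matching polytope \eqref{matching2}--\eqref{matching4} (degree, odd-cycle and nonnegativity constraints) intersected with $\{\bm{1}^\top\bm{x}\le\Delta\}$. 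Applying the preceding theorem with $k=0$ and $l=\Delta$ shows that $P_{\mathrm{int}}$ is the convex hull of the incidence vectors of matchings with at most $\Delta$ edges, hence an integral polytope, and clearly $P_{\mathrm{int}}\subseteq P_{\mathrm{LP}}$. It therefore suffices to prove
\[ \max\{\,w^\top\bm{x}\,:\,\bm{x}\in P_{\mathrm{LP}}\,\}\;=\;\max\{\,w^\top\bm{x}\,:\,\bm{x}\in P_{\mathrm{int}}\,\}, \]
because then an optimal vertex of $P_{\mathrm{int}}$ -- which is the incidence vector of a matching with at most $\Delta$ edges -- lies in $P_{\mathrm{LP}}$ and is optimal there too. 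The inequality ``$\ge$'' is immediate from $P_{\mathrm{int}}\subseteq P_{\mathrm{LP}}$.

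For ``$\le$'' I would take an optimal $\bm{x}^\star\in P_{\mathrm{LP}}$ and rerun the argument of Theorem~\ref{theorem:matching}. Using half-integrality of the matching polytope together with Balinski's decomposition, one writes $\bm{x}^\star$ as a $0/1$ matching plus vertex-disjoint $\tfrac12$-valued odd cycles; the ``contract a monotone run of increasing (resp.\ decreasing) edges into a single chord'' move from the proof of Theorem~\ref{theorem:matching} then turns each odd cycle into an alternating, and hence even, one. Two things have to be checked here, and they are exactly what distinguishes this corollary from Theorem~\ref{theorem:matching}: (i) the contraction replaces $p-1\ge 1$ cycle edges by a single chord, so $\bm{1}^\top\bm{x}$ does not increase and we stay inside $P_{\mathrm{LP}}$; and (ii) the key weight inequality $w_{j_1 j_p}>\sum_k w_{j_k j_{k+1}}$ is unaffected, so the move does not decrease the objective. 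A half-integral point with only even cycles is the average of two matchings, each of size at most $\bm{1}^\top\bm{x}^\star\le\Delta$, one of which attains objective value at least $w^\top\bm{x}^\star$; this matching lies in $P_{\mathrm{int}}$, which gives ``$\le$''.

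I expect the main obstacle to be the first sentence of the previous paragraph: one must justify that an optimum of $P_{\mathrm{LP}}$ can be taken with the ``matching $+$ odd cycles'' half-integral structure, even though the cardinality cut has been appended to the fractional matching polytope, whose vertices alone are the ones covered by Balinski's result. A cleaner way to sidestep this is to dualise the cardinality constraint,
\[ \max_{\bm{x}\in P_{\mathrm{LP}}}w^\top\bm{x}\;=\;\min_{\mu\ge 0}\Bigl(\max_{\bm{x}\in P_{\mathrm{FMP}}}(w-\mu\bm{1})^\top\bm{x}\;+\;\mu\Delta\Bigr), \]
and to observe that for every $\mu\ge0$ the shifted weights still satisfy everything the proof of Theorem~\ref{theorem:matching} needs: sign agreement survives (an edge with $w_e-\mu>0$ has $w_e>0$), and the monotone-run inequality only gets easier since $w_{j_1 j_p}+(p-2)\mu>\sum_k w_{j_k j_{k+1}}$. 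Hence the inner maximum is attained by an integral matching $M^\star$, and a short size-interpolation argument over the optimal face of that inner problem then produces a $w$-optimal matching with at most $\Delta$ edges, which is feasible and optimal for $P_{\mathrm{LP}}$.
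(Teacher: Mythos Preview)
Your plan is sound. The paper itself gives no argument beyond the sentence ``This result [Schrijver's cardinality theorem], in combination with Theorem~\ref{theorem:matching}, provides us with the following corollary,'' so what you are really doing is supplying the missing link between the two theorems. You are right that this link is not entirely automatic: Theorem~\ref{theorem:matching} speaks about $P_{\mathrm{FMP}}$, and once the cardinality cut is added the vertices of $P_{\mathrm{LP}}$ need no longer be half-integral, so Balinski's structure theorem does not apply to them directly.

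Your Lagrangian route is correct, and the verification that the shifted weights $w-\mu\bm{1}$ still satisfy what the proof of Theorem~\ref{theorem:matching} uses is exactly the point. The finishing step can be shortened, though: once you know that for every $\mu\ge 0$ the inner maximum over $P_{\mathrm{FMP}}$ coincides with the maximum over the matching polytope, you have $L(\mu)=L_{\mathrm{int}}(\mu)$ for all $\mu$, and hence $\max_{P_{\mathrm{LP}}}w^\top\bm{x}=\min_\mu L(\mu)=\min_\mu L_{\mathrm{int}}(\mu)=\max_{P_{\mathrm{int}}}w^\top\bm{x}$ by LP strong duality applied to the program over $P_{\mathrm{int}}$. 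Theorem~3 then hands you an integral optimiser in $P_{\mathrm{int}}\subseteq P_{\mathrm{LP}}$. No ``size-interpolation over the optimal face'' is needed.

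There is also a cleaner fix for your first approach, which may be closer to what the authors had in mind. Take any optimal $\bm{x}^\star\in P_{\mathrm{LP}}$ and write it as a convex combination of vertices of $P_{\mathrm{FMP}}$ (not of $P_{\mathrm{LP}}$). If some vertex $v_i$ in this decomposition is fractional, it carries a half-integral odd cycle; that cycle cannot be strictly alternating (alternating cycles are even), so it has a monotone run, and the chord contraction from Theorem~\ref{theorem:matching} yields $v_i'\in P_{\mathrm{FMP}}$ with $w^\top v_i'>w^\top v_i$ and $\bm{1}^\top v_i'\le \bm{1}^\top v_i$. Replacing $v_i$ by $v_i'$ in the convex combination produces a point in $P_{\mathrm{LP}}$ with strictly larger objective, contradicting optimality of $\bm{x}^\star$. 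Hence every vertex in the decomposition is a matching, so $\bm{x}^\star$ lies in the matching polytope and thus in $P_{\mathrm{int}}$; Theorem~3 finishes. This avoids the Lagrangian altogether and dissolves the ``main obstacle'' you flagged.
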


Hence, given a first-stage solution $\bm{x}$ and scenario $\bm{p}$, we can formulate the incremental problem as a linear program with polynomially many constraints. We use this result to derive a compact formulation for the full uncertain single machine scheduling problem (RRS).

We begin by formulating the incremental problem $\text{Inc}(\bm{x},\bm{p})$ according to Corollary~\ref{cor:ccmatching}. Note that we now consider a general first-stage assignment that is not necessarily horizontal, and therefore introduce terms $\sum_{\ell\in \N}\ell\cdot x_{i\ell}$ to track the position in which job $i$ is scheduled in the first-stage schedule. We fix an arbitrary orientation of edges, using $\cE=\{ (i,j)\in \N \times \N : i< j\}$ in the following.
\begin{align*}
	\min_{\bm{z}}\ &\sum_{i\in \N}p_i\Bigg(n+1-\sum_{\ell\in \N}\ell \cdot x_{i\ell}\Bigg)-\sum_{(i,j)\in \cE}(p_i-p_j)\Bigg(\sum_{\ell\in \N}\ell \cdot x_{j\ell}-\sum_{\ell\in \N}\ell \cdot x_{i\ell}\Bigg)z_{ij}\\
	\textnormal{s.t. }&\sum_{(i,j)\in \cE}z_{ij}+\sum_{(j,i)\in \cE}z_{ji} \leq 1 &\hspace{-2cm}\forall i \in \N\\
			  &\sum_{(i,j)\in \cE}z_{ij}\leq \Delta\\
			  &z_{ij}\geq 0&\hspace{-2cm}\forall (i,j)\in \cE.
\end{align*}
Taking the dual of this, we get the following formulation for the adversarial problem $\text{Adv}(\bm{x})$:
\begin{align*}
	\max_{\bm{p},\,\bm{\alpha},\,\gamma}\ &\sum_{i\in \N}p_i\Bigg(n+1-\sum_{\ell\in \N}\ell\cdot x_{i\ell}\Bigg) -\sum_{i\in \N}\alpha_i-\gamma\Delta\\
	\textnormal{s.t. }&\alpha_i + \alpha_j + \gamma \geq (p_i-p_j)\Bigg(\sum_{\ell\in \N}\ell\cdot x_{j\ell}-\sum_{\ell\in \N}\ell\cdot x_{i\ell}\Bigg)& \forall (i,j) \in \cE \\
			  &\sum_{i\in \N}a_{mi}p_i \leq b_m & \forall m\in\cM\\
			  &p_i \geq 0 &\forall i \in \N\\
			  &\alpha_i \geq 0 &\forall i \in \N\\
			  &\gamma \geq 0.
\end{align*}
Since this is a linear program, we immediately obtain following result:
\begin{cor}
The adversarial problem can be solved in polynomial time. 
\end{cor}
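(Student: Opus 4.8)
The plan is to exhibit, for any fixed first-stage schedule $\bm{x}\in\X$, a single linear program of polynomial size whose optimal value equals $\text{Adv}(\bm{x})$, and then to invoke the fact that linear programs are solvable in polynomial time (for instance by the ellipsoid method or an interior-point method). Most of the combinatorial work has already been done: the only non-obvious ingredient is that the incremental problem, being a cardinality-constrained matching problem with weights of the special form, has an exact linear relaxation by Corollary~\ref{cor:ccmatching}; everything else is strong LP duality and bookkeeping.

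First I would check that Corollary~\ref{cor:ccmatching} applies to $\text{Inc}(\bm{x},\bm{p})$ for every scenario $\bm{p}\in\cU$. Writing $\pi_i=\sum_{\ell\in\N}\ell\,x_{i\ell}\in\{1,\dots,n\}$ for the position of job $i$ in the (now arbitrary) first-stage schedule, the incremental objective is the constant $\sum_{i\in\N} p_i(n+1-\pi_i)$ minus $\max_{\bm{z}}\sum_{(i,j)\in\cE}(p_i-p_j)(\pi_j-\pi_i)z_{ij}$ over matchings with at most $\Delta$ edges, and the edge weights $(p_i-p_j)(\pi_j-\pi_i)$ have the form $(a_i-a_j)(b_i-b_j)$ with $a_i=p_i\ge 0$ and $b_i=\pi_i\ge 0$. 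Hence the relaxation that keeps only the degree constraints and the cardinality constraint $\sum z_{ij}\le\Delta$ — dropping the exponentially many odd-cycle constraints — already has an integral optimum, so $\text{Inc}(\bm{x},\bm{p})$ equals the optimal value of the polynomial-size linear program displayed above, for every $\bm{p}\in\cU$.

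Next I would dualise that inner linear program. It is feasible ($\bm{z}=\bm{0}$) and its feasible region is bounded, so strong duality holds and the dual optimum is attained; this expresses $\text{Inc}(\bm{x},\bm{p})$ as a maximum over dual variables $(\bm{\alpha},\gamma)$ of $\sum_{i\in\N} p_i(n+1-\pi_i)-\sum_{i\in\N}\alpha_i-\gamma\Delta$ subject to $\alpha_i+\alpha_j+\gamma\ge (p_i-p_j)(\pi_j-\pi_i)$ for all $(i,j)\in\cE$, together with $\bm{\alpha}\ge\bm{0}$ and $\gamma\ge 0$. Taking the maximum over $\bm{p}\in\cU$ then merges two maximisations: $\text{Adv}(\bm{x})=\max_{\bm{p}\in\cU}\text{Inc}(\bm{x},\bm{p})$ equals the maximum of $\sum_{i\in\N} p_i(n+1-\pi_i)-\sum_{i\in\N}\alpha_i-\gamma\Delta$ over $\bm{p}\in\cU$ and $(\bm{\alpha},\gamma)$ feasible for the dual constraints — precisely the model written just before the statement. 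The decisive observation is that, because $\bm{x}$ is fixed, each $\pi_i$ is a known integer in $[1,n]$, so no products of decision variables occur: the objective and all constraints are linear in $(\bm{p},\bm{\alpha},\gamma)$, giving a genuine linear program with $2n+1$ variables and $O(n^2+M)$ constraints whose coefficients are encoded polynomially in the input $(n,\bm{A},\bm{b},\bm{x})$. Polynomial-time solvability of linear programs finishes the argument.

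I do not expect a serious obstacle here, since Corollary~\ref{cor:ccmatching} carries the combinatorial weight; the points that need care are routine: verifying the nonnegativity hypothesis of the corollary and the sign flip that turns the incremental minimisation into a constant minus a maximum-weight matching, checking that strong duality applies uniformly for all $\bm{p}\in\cU$, and confirming that the merged formulation really has no bilinear terms so that it is an LP rather than a nested max-min. I would also remark that the working assumption $x_{ii}=1$ used at the start of Section~\ref{subsection:matching_formulation} is not needed here, as the position terms $\pi_i=\sum_{\ell\in\N}\ell\,x_{i\ell}$ already accommodate an arbitrary first-stage assignment.
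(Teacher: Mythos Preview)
Your proposal is correct and follows essentially the same route as the paper: the paper's entire argument is the one-line observation that the displayed adversarial formulation is a linear program in $(\bm{p},\bm{\alpha},\gamma)$ once $\bm{x}$ is fixed, and you simply spell out in more detail why this is so (applicability of Corollary~\ref{cor:ccmatching}, strong duality, absence of bilinear terms). One cosmetic slip: to match the form $(a_i-a_j)(b_i-b_j)$ with nonnegative $b_i$ you should take $b_i=n+1-\pi_i$ rather than $b_i=\pi_i$, since $(p_i-p_j)(\pi_j-\pi_i)=(p_i-p_j)\bigl((n{+}1{-}\pi_i)-(n{+}1{-}\pi_j)\bigr)$; this does not affect the argument.
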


Finally, dualising the above adversarial formulation, we get the following compact formulation for problem (RRS):
\begin{align}
	\min_{\bm{x},\,\bm{z},\, \bm{q}}\ &\sum_{m\in\cM} b_mq_m \label{matching_model:first}\\
	\textnormal{s.t. }&\sum_{m\in\cM} a_{mi}q_m+\sum_{(i,j)\in \cE}\Bigg(\sum_{\ell\in \N}\ell\cdot x_{j\ell}-\sum_{\ell\in \N}\ell\cdot x_{i\ell}\Bigg)z_{ij}\nonumber \\
			  &\hspace{0.55cm}-\sum_{(j,i)\in \cE}\Bigg(\sum_{\ell\in \N}\ell \cdot x_{i\ell}-\sum_{\ell\in \N}\ell\cdot x_{j\ell}\Bigg)z_{ji} \geq (n+1-\sum_{\ell \in \N}\ell\cdot x_{i\ell})&\forall i\in \N \label{matching_model:constr1}\\
			    &\sum_{(i,j)\in \cE}z_{ij}+\sum_{(j,i)\in \cE}z_{ji}\leq 1 & \forall i\in \N \label{matching_model:constr2}\\
			    &\sum_{(i,j)\in \cE}z_{ij}\leq \Delta \label{matching_model:constr3}\\
	&x\in \mathcal{X}\\
	&q_m\ge 0 &\hspace{-3cm}\forall m\in \cM\\
	&z_{ij}\geq 0&\hspace{-2cm}\forall (i,j)\in \cE \label{matching_model:last}.
\end{align}
Upon linearising the quadratic $x_{i\ell}\cdot z_{ij}$ and $x_{j\ell}\cdot z_{ij}$ terms, this model becomes a mixed-integer linear program. The fully linearised model contains $O(n^3)$ constraints and variables and is presented in full in Appendix~\ref{sec:model2}.
 
\subsection{Assignment-based formulation} \label{subsection:assignment_formulation}

We now consider an alternative formulation for the incremental problem. Again, for the purposes of examining the incremental problem, we initially consider the first-stage schedule to be a horizontal assignment, i.e. $x_{ii}=1$ for all $i\in \N$. By letting variables $y_{ij}$ represent a second-stage assignment (we now return to the convention where the index $i$ is used to denote a job and the index $j$ is used to denote a position in the schedule), we can formulate the incremental problem as follows:
\begin{align}
\min \ &\sum_{i\in \N} \sum_{j\in \N} p_i (n+1-j) y_{ij} \label{inc1}\\
\text{s.t. } & \sum_{i\in \N} y_{ij} = 1 & \forall j\in \N \label{inc2} \\
& \sum_{j\in \N} y_{ij} = 1 & \forall i\in \N \label{inc3}\\
& y_{ij} = y_{ji} & \forall i,j\in \N \label{inc4}\\
& \sum_{i\in \N} y_{ii} \ge n-2\Delta \label{inc5}\\
& y_{ij} \in \{0,1\} & \forall i,j\in \N \label{inc6}.
\end{align}
Constraints (\ref{inc4}) and (\ref{inc5}) ensure that the second-stage assignment is a feasible recovery to the first-stage solution, that is, the second-stage assignment is constructed by swapping the first-stage positions of up to $\Delta$ disjoint pairs of jobs. Note that this is a level-constrained symmetric perfect matching problem, which can be solved in polynomial time \cite[Theorem~2.28]{thomas2016matching}.

We show that problem (\ref{inc1})-(\ref{inc6}) can be solved as a linear program as a result of its non-general cost structure. As the proof is technical and based on a reduction to the corresponding maximum weight matching problem, it is omitted here and can be found in Appendix~\ref{section:proofs}.

\begin{theorem}\label{theoremlp}
For any $\bm{a},\,\bm{b}\in\mathbb{R}^n_+$, the problem
\begin{align}
	\min\ & \sum_{i\in \N} \sum_{j\in \N} a_i b_j y_{ij} \label{assign1} \\
	\text{s.t. } & \eqref{inc2}-\eqref{inc5} \label{assign2}\\
		     & y_{ij} \ge 0 \label{assign3}
\end{align}
has an optimal solution with $y_{ij}\in\{0,1\}$ for all $i,j\in \N$.
\end{theorem}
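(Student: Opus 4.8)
The plan is to reduce the linear program \eqref{assign1}--\eqref{assign3} to the cardinality-constrained matching LP \eqref{matchinc1}--\eqref{matchinc4} of Corollary~\ref{cor:ccmatching}, whose integrality we may invoke directly. The key observation is that, because of the symmetry constraints \eqref{inc4} and the doubly-stochastic constraints \eqref{inc2}--\eqref{inc3}, the off-diagonal entries $y_{ij}$ with $i\neq j$ are the only genuine degrees of freedom: setting $z_{\{i,j\}}:=y_{ij}$ for each unordered pair $\{i,j\}$, the row-sum constraints force $y_{ii}=1-\sum_{e\in\delta(i)}z_e$, so the whole vector $\bm{y}$ is recovered from $\bm{z}$.

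First I would verify that $\bm{y}\mapsto\bm{z}$ is a bijection between the feasible region of \eqref{assign1}--\eqref{assign3} and that of \eqref{matchinc2}--\eqref{matchinc4}. In one direction, $y_{ij}\ge 0$ for $i\neq j$ gives $z_e\ge 0$, while $y_{ii}\ge 0$ gives $\sum_{e\in\delta(i)}z_e\le 1$; and since $\sum_{i\in\N}y_{ii}=\sum_{i\in\N}\bigl(1-\sum_{e\in\delta(i)}z_e\bigr)=n-2\sum_{e\in\cE}z_e$, the level constraint \eqref{inc5} is equivalent to $\sum_{e\in\cE}z_e\le\Delta$. Constraints \eqref{inc2}, \eqref{inc3} and \eqref{inc4} hold automatically under the construction of $\bm{y}$ from $\bm{z}$, and the reverse direction is the same computation read backwards.

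Next I would rewrite the objective. Substituting $y_{ij}=y_{ji}=z_{\{i,j\}}$ for $i\neq j$ and $y_{ii}=1-\sum_{e\in\delta(i)}z_e$ and collecting terms by unordered pair yields
\[ \sum_{i\in\N}\sum_{j\in\N} a_i b_j\, y_{ij} \;=\; \sum_{i\in\N} a_i b_i \;-\; \sum_{e=\{i,j\}\in\cE}(a_i-a_j)(b_i-b_j)\, z_e . \]
Hence minimising the left-hand side over the feasible region of \eqref{assign1}--\eqref{assign3} is, up to the fixed additive constant $\sum_{i\in\N}a_ib_i$ and a change of sign, the same as maximising $\sum_{e=\{i,j\}\in\cE}(a_i-a_j)(b_i-b_j)z_e$ over the feasible region of \eqref{matchinc1}--\eqref{matchinc4}. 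By Corollary~\ref{cor:ccmatching} this problem has an optimal solution $\bm{z}^*$ with $z^*_e\in\{0,1\}$; mapping it back gives $y^*_{ij}=z^*_{\{i,j\}}\in\{0,1\}$ for $i\neq j$, and $y^*_{ii}=1-\sum_{e\in\delta(i)}z^*_e\in\{0,1\}$ because $\bm{z}^*$ is a $0/1$ matching and thus selects at most one edge at each vertex. So $\bm{y}^*$ is an integral optimal solution.

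I expect the only delicate part to be the bookkeeping of the correspondence rather than anything genuinely hard: confirming that the level constraint \eqref{inc5} translates \emph{exactly} into the cardinality bound $\sum_{e\in\cE}z_e\le\Delta$ via the identity $\sum_{i\in\N}y_{ii}=n-2\sum_{e\in\cE}z_e$, that the objective transformation has no leftover cross terms, and that integrality of the matching variables propagates to the diagonal entries. Once this reduction is set up, Corollary~\ref{cor:ccmatching} (hence ultimately Theorem~\ref{theorem:matching}) does all the work.
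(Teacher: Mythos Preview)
Your proposal is correct and follows essentially the same route as the paper: both set $z_{\{i,j\}}=y_{ij}$ for $i\neq j$, recover the diagonal entries from the row sums, verify that the feasible regions correspond and that the objective transforms into $\sum_i a_ib_i-\sum_{e=\{i,j\}}(a_i-a_j)(b_i-b_j)z_e$, and then invoke Corollary~\ref{cor:ccmatching}. Your additive formula $y_{ii}=1-\sum_{e\in\delta(i)}z_e$ is in fact slightly cleaner than the paper's multiplicative one, since it makes the correspondence a genuine affine bijection between the two polytopes rather than relying on the cross term vanishing.
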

We now use this result to find an assignment-based formulation for (RRS). We first write the incremental problem in the form given by \eqref{assign1}-\eqref{assign3}. Since we are now considering the case where $\bm{x}$ is not necessarily a horizontal matching, we rearrange the indices accordingly. 
\begin{align*}
	\min \ &\sum_{i\in \N} \sum_{j\in \N} p_i (n+1-\sum_{\ell\in \N}\ell \cdot x_{j\ell}) y_{ij}\\
\text{s.t. } & \sum_{i\in \N} y_{ij} = 1 & \forall j\in \N  \\
& \sum_{j\in \N} y_{ij} = 1 & \forall i\in \N \\
& y_{ij} = y_{ji} & \forall i,j\in \N \\
& \sum_{i\in \N} y_{ii} \ge n-2\Delta \\
& y_{ij} \geq 0 & \forall i,j\in \N .
\end{align*}
Taking the dual of this, the adversarial problem can be formulated in the following way:
\begin{align*}
	\max_{\bm{\alpha},\,\bm{\beta},\,\bm{\gamma}, \tau,\,\bm{p}}\ & \sum_{i\in \N} (\alpha_i + \beta_i) + (n-2\Delta)\tau \\
\text{s.t. } & \alpha_j + \beta_i + \gamma_{ij} \le (n+1 - \sum_{\ell\in \N}\ell\cdot x_{j\ell} ) p_i & \forall i,j\in \N : i<j \\
& \alpha_j + \beta_i - \gamma_{ji}\le (n+1 - \sum_{\ell\in \N}\ell\cdot x_{j\ell})p_i & \forall i,j\in \N : i>j \\
& \alpha_i + \beta_i + \tau \le (n+1 - \sum_{\ell\in \N}\ell\cdot x_{j\ell})p_i & \forall i\in \N \\
& \sum_{i\in \N} a_{mi}p_i\leq b_m & \forall m\in\cM \\
& p_i \geq 0 & \forall i\in \N\\
& \tau \ge 0.
\end{align*}
Finally, we dualise this adversarial formulation to derive the following formulation for the recoverable problem:
\begin{align}
	\min_{\bm{x},\,\bm{y},\,\bm{q}}\ & \sum_{m\in\cM} b_mq_m  \label{assignment_model:first}\\
	\text{s.t. } & \sum_{i\in \N} y_{ij} = 1 & \forall j\in \N \label{assignment_model:assignment_constr1}\\
& \sum_{j\in \N} y_{ij} = 1 & \forall i\in \N \label{assignment_model:assignment_constr2}\\
& \sum_{i\in \N} y_{ii} \ge n-2\Delta \label{assignment_model:constr1}\\
& y_{ij} = y_{ji} & \forall i,j\in \N \label{assignment_model:constr2}\\
& \sum_{m\in\cM} a_{mi}q_m \ge \sum_{j\in \N} (n+1-\sum_{\ell\in \N}\ell \cdot x_{j\ell})y_{ij} & \forall i\in \N\label{assignment_model:constr3}\\
& \bm{x} \in \X \\
& q_m \geq 0 & \forall m\in\cM \\
& y_{ij} \ge 0 & \forall i,j\in \N \label{assignment_model:last}
\end{align}
As before, products $x_{j\ell}\cdot y_{ij}$ can be linearised using standard techniques. The resulting mixed-integer linear program contains $O(n^3)$ constraints and variables and can be found in Appendix~\ref{sec:model3}.

\section{Comparison of formulations}

This section presents a brief investigation into the linear relaxations of the three formulations derived above in order to compare their relative theoretical strengths. We begin by showing that no comparisons can be made between the general formulation and the other two formulations.

In preparation of the proof of this result we introduce budgeted uncertainty as a special case of polyhedral uncertainty. A budgeted uncertainty set can be defined as 
$$\mathcal{U}_B=\left\{\bm{p}\in \mathbb{R}^n_+:
\sum_{i\in \N}\frac{p_i-\hat{p}_i}{\bar{p}_i-\hat{p_i}}\leq \Gamma,\,
p_i\in[\hat{p}_i, \hat{p}_i+\bar{p}_i],\,i\in \N\right\},$$
where $\hat{p}_i$ is the nominal processing time of job $i$ and $\bar{p}_i$ is the worst-case delay to the processing time of job $i$. Introduced by \cite{bertsimas2004price}, its motivation is to exclude unrealistically pessimistic worst-case scenarios from the uncertainty set and thereby avoid overly conservative and highly-expensive solutions. This is achieved by assuming that at most $\Gamma$ jobs can simultaneously reach their maximum delays. Note that when $\Gamma=0$, each job assumes its nominal processing time and the $\mathcal{U}_B$ reduces to a single scenario. Additionally, observe that as $\Gamma \rightarrow n$, this budgeted uncertainty set becomes an interval. When $\Gamma = n$ the worst-case scenario is known a priori to be when all jobs achieve their worst-case processing times $\hat{p}_i+\bar{p}_i$. In this case the problem can be solved by simply ordering the jobs accoring to their worst-case processing times, and no recourse action will be required. The proof of the following proposition makes use of an instance involving a bugdeted uncertainty set.

\begin{theorem}\label{thm:incomparable_formulations}
	The general formulation (\ref{generalfull:first})-(\ref{generalfull:last}) is incomparable with both the matching-based formulation (\ref{matchingfull:first})-(\ref{matchingfull:last}) and the assignment-based formulation (\ref{assignmentfull:first})-(\ref{assignmentfull:last}).
\end{theorem}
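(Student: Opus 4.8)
The plan is to establish incomparability by exhibiting two concrete instances: one on which the linear relaxation of the general formulation is strictly stronger (i.e. has a larger optimal value) than that of the matching-based and assignment-based formulations, and one on which the reverse holds. Since all three formulations are exact for the integer problem, the relaxation values are all bounded above by the common integer optimum, so it suffices to compute the three LP bounds on each instance and observe that neither dominates the other. I would work throughout with small instances — ideally $n=2$ or $n=3$ jobs, $\Delta=1$ (so that a recourse swap is actually available), and a budgeted uncertainty set $\cU_B$ with a single budget parameter $\Gamma$ chosen fractionally (e.g. $\Gamma=1$ with two uncertain jobs, or $\Gamma$ non-integral) so that the adversary's LP has a genuinely fractional optimal vertex. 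Budgeted uncertainty is the natural choice here because, as noted just before the statement, it is a clean special case of polyhedral uncertainty and the $q_m$ dual variables have a transparent meaning.

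The first half — general formulation strictly stronger — should exploit the structural difference between the models. The general formulation keeps the convex-combination multipliers $\mu_k$ over a pool of $K$ recovery solutions together with the explicit swap variables $z^k_{ii'}$ and the products $z^k_{ii'}x_{i'j}\mu_k$; its relaxation therefore still "remembers" that each recovery schedule is a genuine assignment obtained by disjoint swaps. The matching- and assignment-based formulations instead replace this by the dual of a continuous relaxation of the incremental matching/assignment problem and keep only a single fractional $\bm z$ (resp. $\bm y$); relaxing $\bm x\in\X$ then lets the fractional first-stage schedule "spread out" its position vector $\sum_\ell \ell\,x_{i\ell}$ and gain recourse value that no integral schedule could obtain. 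So I would pick an instance where the LP of the compact matching/assignment model can set $\bm x$ fractionally to make the right-hand side $n+1-\sum_\ell \ell\,x_{i\ell}$ of the coupling constraints (\ref{matching_model:constr1})/(\ref{assignment_model:constr3}) artificially favourable, driving $\sum_m b_m q_m$ below the true optimum, while the general formulation's extra linking structure prevents this. The second half — general formulation strictly weaker on a different instance — is the easier direction to believe, since the general model's big-$M$-style linearisation of the triple products $z^k_{ii'}x_{i'j}\mu_k$ is notoriously loose; here I would pick an instance with $\Delta$ small relative to $n$ so that the recovery is tightly constrained, and exhibit a fractional $(\bm\mu,\bm z,\bm q)$ solution whose objective lies strictly below both compact-model relaxation values, which are tighter because they dualise an exactly-solvable (by Corollary~\ref{cor:ccmatching} / Theorem~\ref{theoremlp}) subproblem.

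The main obstacle is getting the numbers right: I need one worked instance (processing-time intervals $\hat p_i,\bar p_i$, the budget $\Gamma$, the value of $\Delta$, and $n$) together with an explicitly verified feasible fractional solution of each LP, computing or at least lower/upper-bounding three optimal LP values and checking the strict inequalities go in opposite directions on the two instances. Because the linearised general formulation (Appendix~\ref{sec:model1}) has $O(n^3K)$ variables, even $n=2$ or $n=3$ produces a sizeable LP, so I would keep $n$ as small as possible and present the certifying primal/dual solutions explicitly rather than appealing to a solver in the text. A secondary subtlety is making sure the chosen instances are genuinely in the scope of all three formulations (in particular that $\X(\bm x)\neq\emptyset$ and that the distance constraint $d(\bm x,\bm y^k)\le\Delta$ is non-vacuous), and that the incomparability is witnessed by the \emph{relaxation} values and not accidentally by a modelling artefact; I would double-check by confirming that all three integer optima coincide on each instance, as they must.
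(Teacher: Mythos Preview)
Your high-level plan --- exhibit two instances on which the LP-relaxation dominance goes in opposite directions --- is exactly what the paper does, so the overall structure is fine. However, your explanation of \emph{why} the general formulation could ever be the tighter one is backwards, and this misdiagnosis would make the ``general stronger'' half of the proof hard to complete.

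You argue that the general formulation's relaxation ``remembers'' that each recovery schedule is a genuine assignment, whereas the matching- and assignment-based relaxations let a fractional $\bm x$ spread its position vector and drive $\sum_m b_m q_m$ below the true optimum. In fact the opposite happens. The linearisation variables $h^k_{ii'j}$ in the general model are only bounded below by $\mu_k + w^k_{ii'j} - 1$ and by $0$; in the relaxation one may simply set every $h^k_{ii'j}=0$, which makes the right-hand side of the coupling constraint $\sum_m a_{mi}q_m \ge \sum_k\sum_j(n+1-j)\sum_{i'}h^k_{ii'j}$ equal to zero, and then $q_m=0$ for all $m$ is feasible. Hence the LP value of the general formulation is identically~$0$ whenever $a_{mi}\ge 0$ and $b_m\ge 0$ (and in the paper's examples it is $0$ regardless). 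So the general relaxation preserves \emph{less} structure, not more; your ``easier direction to believe'' is in fact the universal behaviour of that model.

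Once you have this observation, the proof collapses to something much simpler than what you sketch: you only need (i) one instance on which the matching and assignment LP values are strictly positive (so they beat the general model's $0$), and (ii) one instance on which they are strictly negative (so the general model's $0$ beats them). Part~(ii) forces some $b_m<0$, which is exactly what the lower-bound constraints $p_i\ge \hat p_i$ in a budgeted set supply. The paper uses a $2$-job polyhedral instance with all $b_m>0$ for~(i) and an $8$-job budgeted instance for~(ii), simply reporting the three LP optima in each case; no explicit primal/dual certificates are written out. Your plan to hand-verify fractional solutions for all three linearised LPs (including the $O(n^3K)$-variable general model) is therefore unnecessary once the ``general LP $=0$'' shortcut is in hand.
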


\begin{proof}
First consider a problem with two jobs with processing times that lie in the uncertainty set $\mathcal{U}=\{(p_1,p_2):p_1 \leq 3,\, p_2\leq 3,\,p_1+2p_2\leq 7\}$. Suppose also that $\Delta=1$, i.e. one swap can be made to amend the first-stage schedule. In this case, the linear relaxation of the matching-based formulation has an objective value of 7, whilst the linear relaxation of the assignment-based formulation has an objective value of 5.  

Now consider an instance involving jobs with $\hat{\bm{p}}=(10,8,9,4,1,5,7,1)$ and $\bar{\bm{p}}=(9,7,5,4,1,3,6,1)$ lying in the budgeted uncertainty set $\mathcal{U}_B$, and set $\Gamma=1$ and $\Delta=1$. The linear relaxation of the matching-based formulation for this instance has an optimal objective value of -9.2 (to 1 decimal place), whilst the linear relaxation of the assignment-based formulation has an objective value of -285.4 (to 1 decimal place).

	For both of these instances, the linear relaxation of the general formulation attains an objective value of 0. (In fact, for any polyhedral uncertainty set in which $a_{mi} \ge 0$ and $b_m \ge 0$ for all $i\in\N$, the linear relaxation of the general formulation will be 0, since it is free to set $h_{ii'j}^k=0$ for all $i,\,i',\,j\in \N,\,k\in\cK$ and therefore $q_m = 0$ for all $m\in\cM$.)
	
	These examples show that the matching and assignment-based formulations are tighter than the general formulation for some instances, but less tight for other instances. Hence the general formulation is incomparable with the matching and assignment-based formulations.

\end{proof} 

It is the case however that the objective value of the linear relaxation of the non-linear matching-based formulation is always greater than or equal to the objective value for the linear relaxation of the non-linear assignment-based formulation. That is, that non-linear matching formulation dominates the non-linear assignment formulation.

\begin{theorem}\label{prop:dominance}
	The non-linear matching-based formulation (\ref{matching_model:first})-(\ref{matching_model:last}) dominates the non-linear assignment-based formulation (\ref{assignment_model:first})-(\ref{assignment_model:last}).
\end{theorem}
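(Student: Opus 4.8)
The plan is to prove the dominance by a direct, feasibility-preserving transformation between the two linear relaxations, rather than by manipulating the dual (adversarial) problems. Observe first that both relaxations minimise the same objective $\sum_{m\in\cM}b_mq_m$, and both relax $\bm x\in\X$ in the same way, namely to the assignment polytope defined by the equalities \eqref{assignment1}--\eqref{assignment2} together with $\bm x\ge\bm 0$; the remaining variables $\bm z$, $\bm y$, $\bm q$ are already continuous in both models. Hence it suffices to take an arbitrary $(\bm x,\bm z,\bm q)$ feasible for the relaxation of the matching-based model \eqref{matching_model:first}--\eqref{matching_model:last} and construct an $(\bm x,\bm y,\bm q)$ feasible for the relaxation of the assignment-based model \eqref{assignment_model:first}--\eqref{assignment_model:last} with the same $\bm x$ and $\bm q$: this yields $\mathrm{LP}_{\text{assign}}\le\mathrm{LP}_{\text{match}}$, which is exactly the asserted dominance.

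For the transformation I would keep $\bm x$ and $\bm q$ unchanged and, writing $z_{\{i,j\}}$ for the value $z_{ij}$ on the unordered pair $\{i,j\}$ (recall the fixed orientation $\cE=\{(i,j):i<j\}$), set $y_{ij}=y_{ji}=z_{\{i,j\}}$ for $i\ne j$ and $y_{ii}=1-\sum_{j\ne i}z_{\{i,j\}}$. Then the routine checks go through immediately: the degree constraints \eqref{matching_model:constr2} read $\sum_{j\ne i}z_{\{i,j\}}\le 1$, so $y_{ii}\ge 0$ and $\bm y\ge\bm 0$; the row/column equalities \eqref{assignment_model:assignment_constr1}--\eqref{assignment_model:assignment_constr2} and the symmetry constraint \eqref{assignment_model:constr2} hold by construction; the cardinality constraint \eqref{matching_model:constr3} gives $\sum_i\sum_{j\ne i}z_{\{i,j\}}=2\sum_{(i,j)\in\cE}z_{ij}\le 2\Delta$, hence $\sum_i y_{ii}=n-2\sum_{(i,j)\in\cE}z_{ij}\ge n-2\Delta$, which is \eqref{assignment_model:constr1}; and the objective value is unchanged because $\bm q$ is.

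The one substantive step is the coupling constraint \eqref{assignment_model:constr3}. Abbreviating the first-stage position of job $j$ by $\pi_j:=\sum_{\ell\in\N}\ell x_{j\ell}$, I would substitute $y_{ii}=1-\sum_{j\ne i}y_{ij}$ into $\sum_{j\in\N}(n+1-\pi_j)y_{ij}$ to obtain $(n+1-\pi_i)-\sum_{j\ne i}(\pi_j-\pi_i)y_{ij}$, then split the sum over $j\ne i$ into the parts $j>i$ and $j<i$ and use $y_{ij}=z_{\{i,j\}}$; the outcome is precisely $(n+1-\pi_i)$ minus the two $\bm z$-terms on the left-hand side of \eqref{matching_model:constr1}. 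Thus \eqref{assignment_model:constr3} for job $i$ is literally a rearrangement of \eqref{matching_model:constr1} for job $i$, and feasibility transfers. I expect this last bookkeeping — keeping track of the fixed orientation $\cE$ and the induced signs of the $(\pi_j-\pi_i)z_{ij}$ contributions when passing between the oriented form of \eqref{matching_model:constr1} and the symmetric form of \eqref{assignment_model:constr3} — to be the only delicate part of the argument; everything else is immediate. As a closing remark, running the same substitution in reverse, $z_{ij}:=y_{ij}$ for $(i,j)\in\cE$, sends assignment-feasible points back to matching-feasible ones with the same objective value, so in fact the two relaxations coincide; only the inequality is needed for the statement.
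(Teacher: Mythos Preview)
Your proof is correct and follows essentially the same approach as the paper: both define $y_{ij}=y_{ji}=z_{\{i,j\}}$ for $i\ne j$ and $y_{ii}=1-\sum_{j\ne i}z_{\{i,j\}}$, then verify the assignment, symmetry, cardinality and coupling constraints in turn, with the coupling constraint \eqref{assignment_model:constr3} shown to be an algebraic rearrangement of \eqref{matching_model:constr1} via the substitution $\pi_j=\sum_\ell \ell\,x_{j\ell}$.

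Your closing remark goes further than the paper: you observe that the reverse substitution $z_{ij}:=y_{ij}$ for $(i,j)\in\cE$ is also feasibility-preserving, so the two non-linear relaxations in fact coincide. This is correct (the degree, cardinality and coupling constraints all transform bijectively, using $y_{ii}\ge 0$ and $\sum_i y_{ii}\ge n-2\Delta$ for the reverse direction), whereas the paper asserts $P(F_m)\subsetneq P(F_a)$ and appeals to the numerical examples of Theorem~\ref{thm:incomparable_formulations} for strictness. Those examples, however, concern the \emph{linearised} formulations in the appendix, not the non-linear models \eqref{matching_model:first}--\eqref{matching_model:last} and \eqref{assignment_model:first}--\eqref{assignment_model:last}; your bijection shows the strictness claim cannot hold for the non-linear relaxations. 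This does not affect the theorem as stated, since ``dominates'' only requires the inequality you establish.
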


The proof of this statement involves the construction of a transformation $\phi$ to show that any feasible solution to the matching formulation can be transformed into a feasible solution to the assignment problem. The proof can be found in Appendix~\ref{section:proofs}. It does however remain open as to whether this result can be extended to the linearised versions of these formulations given by (\ref{matchingfull:first})-(\ref{matchingfull:last}) and (\ref{assignmentfull:first})-(\ref{assignmentfull:last}), respectively.

\section{Computational experiments}
\label{section:computational_experiments}

This section presents and compares results from solving the three compact models introduced in this paper, as well as three additional heuristic solution methods. As a particular example of a general polyhedral uncertainty, here we consider budgeted uncertainty as outlined in the previous section. Before introducing the heuristics we propose for solving this problem and examining their performance, we comment on the test instances and computational hardware used for these experiments.

Instances have been generated by randomly sampling both $\hat{p}_i$ and $\bar{p}_i$ from the set $\{1,2,\dots,100\}$. 20 instances of sizes $n\in\{10,15,20\}$ have been generated, resulting in a total of 60 deterministic test instances. For each deterministic instance, three uncertain instances have been generated by setting $\Gamma\in\{3,5,7\}$, resulting in a total of 180 uncertain instances. These instances, as well as the complete results data, can be found at \url{https://github.com/boldm1/RR-single-machine-scheduling}.

All methods have been run on 4 cores of a 2.30GHz Intel Xeon CPU, limited to 16GB RAM. The exact models have been solved using Gurobi 9.0.1, with a time limit of 10 minutes.

\subsection{Heuristics}

The three heuristic methods we consider are as follows:

\begin{enumerate}
	\item \textbf{Sorting}. Obtain a schedule by ordering the jobs $i\in \N$ according to non-decreasing $\hat{p}_i+\bar{p}_i$, i.e. a schedule that performs best in the worst-case scenario when $\Gamma=n$, and evaluate by solving $\text{Adv}(\bm{x})$.
	\item \textbf{Max-min}. Solve the max-min problem 
	\[ \max_{\bm{p}\in\cU_B} \min_{\bm{x}\in\X} \sum_{i\in \N} \sum_{j\in \N} p_i(n+1-j)x_{ij} \]
	to obtain a worst-case scenario $\bm{p}\in \mathcal{U}_B$. Find a schedule $\bm{x}$ that performs best in this worst-case scenario and evaluate by solving $\text{Adv}(\bm{x})$. 
	\item \textbf{Min-max}. Solve the problem without recourse, i.e. with $\Delta=0$.
\end{enumerate}

Each heuristic method has been used to find a feasible solution to all 180 uncertain test instances. Figure \ref{fig:heuristic_best_gaps} shows the cumulative percentage of instances solved by each of the heuristics to within a given gap to the best solution found by any method, including the exact models, which have been solved with $\Delta=2$. It is clear from this plot that min-max is the strongest of the three proposed heuristics, solving all 180 instances to within 3.2\% of the best solution. This gap increases to 8.8\% for sorting, whilst max-min solves all but one instance to within 15\%. The average of these gaps across all instances for min-max, sorting and max-min are 0.9\%, 3.1\% and 4.1\% respectively. 

Given its strong performance, we propose using min-max to provide a warm-start solution to the exact models. The benefits of this are assessed in the next section.

\begin{figure}[htb]
	\centering
	\includegraphics[scale=0.7]{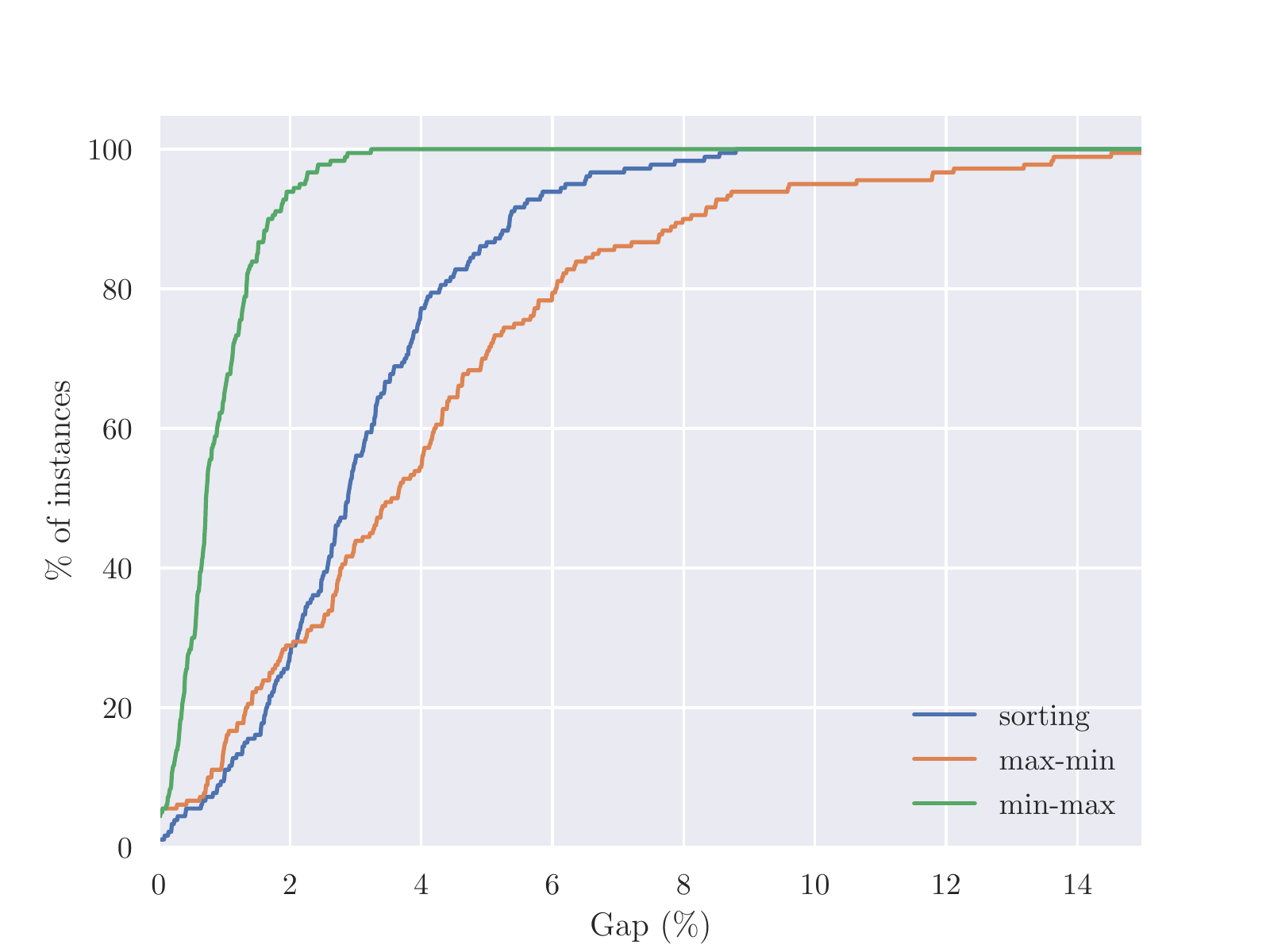}
	\caption{Cumulative percentage of instances solved to within a given gap of the best known solution.}
	\label{fig:heuristic_best_gaps}
\end{figure}

\subsection{Exact models}

We now examine the results of solving the three exact models proposed in this paper and their warm-start variants. The 180 uncertain instances have been solved by each model and its warm-start variant for $\Delta\in\{0,1,2,3\}$. Note that the general model has been implemented with $K=2$. This has been chosen to make the general model as computationally efficient to solve as possible, whilst actually still providing an advantage over the min-max model, i.e. for $K=1$ the general model corresponds to the min-max model.

Tables \ref{table:Gamma} and \ref{table:Delta} compare the performance of these exact models for different values of $\Gamma$ and $\Delta$ respectively. For each set of 20 instances with the same combination of instance parameters, Tables \ref{table:Gamma} and \ref{table:Delta} report the following:
\begin{itemize}
	\item \textit{time} - Average CPU time (secs) required to solve the instances that were solved to optimality within the time limit.
	\item \textit{LBgap} - Average gap (\%) between the best objective bound and the best known feasible solution found by any method, over the instances not solved to optimality within the time limit.
	\item \textit{UBgap} - Average gap (\%) between the best feasible solution found within the time limit and the best known feasible solution found by any method, over the instances not solved to optimality within the time limit.
	\item $\#$\textit{solv} - Number of instances solved to optimality within the time limit.
\end{itemize}

From Tables \ref{table:Gamma} and \ref{table:Delta}, it is clear that the general model is by far the weakest of the three proposed models. Other than for $\Delta=0$, no instances are solved to optimality. The general model is able to find near-optimal feasible solutions, but fails to begin closing the optimality gap in most instances. The matching-based model improves considerably on the general model, whilst the assignment model is the strongest performing of the three exact models, solving the most number of instances to optimality and having the smallest gaps over those instances that cannot be solved to optimality. The addition of a warm-start solution is clearly beneficial only for the assignment-based model, where the addition solves more instances to optimality in less time. 

From Table \ref{table:Gamma} it can be seen that instances tend to become harder to solve as $\Gamma$ increases from 3 to 7. From Table \ref{table:Delta} we observe that, unsurprisingly, instances are easiest to solve to solve when $\Delta=0$ (this corresponds to solving the min-max model). Interestingly however, when $n=15$ and $n=20$, instances are most difficult when $\Delta=1$, and become easier to solve as the number of recovery swaps allowed, $\Delta$, increases, i.e. the second stage-solution becomes less constrained by the first-stage solution.

\begin{table}
\centering
\begin{tabular}{lllrrrrcrrrr}
\hline \hline
	     &          &          & \multicolumn{4}{c}{General}                      & & \multicolumn{4}{c}{General + warm-start}                     \\
   \cline{4-7} \cline{9-12}
	$n$  & $\Gamma$ & $\Delta$ & \textit{time}  & \textit{LBgap} & \textit{UBgap} & $\#$\textit{solv} & & \textit{time}  & \textit{LBgap} & \textit{UBgap} & $\#$\textit{solv}  \\
\hline\hline
10 & 3        & 2        & -          & 100.0       & 0.2   & 0            &        & -      & 100.0       & 0.2   & 0        \\
10 & 5        & 2        & -          & 100.0       & 0.1   & 0      	 &        & -      & 100.0       & 0.1   & 0        \\
10 & 7        & 2        & -          & 100.0       & 0.1   & 0      	 &        & -      & 100.0       & 0.0   & 0        \\
15 & 3        & 2        & -          & 100.0       & 0.4   & 0      	 &        & -      & 100.0       & 0.3   & 0        \\
15 & 5        & 2        & -          & 100.0       & 0.6   & 0      	 &        & -      & 100.0       & 0.3   & 0        \\
15 & 7        & 2        & -          & 100.0       & 0.4   & 0      	 &        & -      & 100.0       & 0.3   & 0        \\
20 & 3        & 2        & -          & 100.0       & 0.8   & 0      	 &        & -      & 100.0       & 0.3   & 0        \\
20 & 5        & 2        & -          & 100.0       & 0.9   & 0      	 &        & -      & 100.0       & 0.4   & 0        \\
20 & 7        & 2        & -          & 100.0       & 1.2   & 0      	 &        & -      & 100.0       & 0.5   & 0        \\
   \cline{4-7} \cline{9-12}
   &          &          &          &         &         & 0      	 &        &        &         &         & 0        \\
   &          &          & \multicolumn{4}{c}{Matching}                      	 &        & \multicolumn{4}{c}{Matching + warm-start}                     \\
   \cline{4-7} \cline{9-12}
	$n$  & $\Gamma$ & $\Delta$ & \textit{time} & \textit{LBgap} & \textit{UBgap} & $\#$\textit{solv} & & \textit{time} & \textit{LBgap} & \textit{UBgap} & $\#$\textit{solv}  \\
\hline\hline
10 & 3        & 2        & 2.6     & -       & -       & 20     	 &        & 2.6    & -       & -       & 20       \\
10 & 5        & 2        & 4.1     & -       & -       & 20     	 &        & 4.1    & -       & -       & 20       \\
10 & 7        & 2        & 2.1     & -       & -       & 20     	 &        & 3.4    & -       & -       & 20       \\
15 & 3        & 2        & 110.9   & 0.0     & 0.0     & 19     	 &        & 110.0  & -       & -       & 20       \\
15 & 5        & 2        & 132.4   & 0.2     & 0.0     & 17     	 &        & 110.0  & 0.1     & 0.0     & 18       \\
15 & 7        & 2        & 111.7   & 0.2     & 0.0     & 16     	 &        & 97.6   & 0.2     & 0.0     & 16       \\
20 & 3        & 2        & 540.7   & 0.9     & 0.0     & 3      	 &        & 463.7  & 0.9     & 0.0     & 2        \\
20 & 5        & 2        & 598.6   & 0.5     & 0.0     & 1      	 &        & -      & 0.6     & 0.0     & 0        \\
20 & 7        & 2        & -       & 0.5     & 0.0     & 0      	 &        & -      & 0.4     & 0.0     & 0        \\
   \cline{4-7} \cline{9-12}
   &          &          &         &         &         & 116    	 &        &        &         &         & 116      \\ 
   &          &          & \multicolumn{4}{c}{Assignment}                      	 &        & \multicolumn{4}{c}{Assignment + warm-start}                     \\
   \cline{4-7} \cline{9-12}
	$n$  & $\Gamma$ & $\Delta$ & \textit{time}  &  \textit{LBgap} & \textit{UBgap} & $\#$\textit{solv} & & \textit{time}  & \textit{LBgap} & \textit{UBgap} & $\#$\textit{solv}  \\
\hline\hline
10 & 3        & 2        & 8      & -       & -       & 20     	 &        & 8.2    & -       & -       & 20       \\
10 & 5        & 2        & 6.8    & -       & -       & 20     	 &        & 5.1    & -       & -       & 20       \\
10 & 7        & 2        & 3.4    & -       & -       & 20     	 &        & 3.0    & -       & -       & 20       \\
15 & 3        & 2        & 50.9   & -       & -       & 20     	 &        & 55.4   & -       & -       & 20       \\
15 & 5        & 2        & 62.6   & -       & -       & 20     	 &        & 59.3   & -       & -       & 20       \\
15 & 7        & 2        & 59.1   & -       & -       & 20     	 &        & 64.3   & -       & -       & 20       \\
20 & 3        & 2        & 344.0  & -       & -       & 20     	 &        & 223.7  & 0.9     & 0       & 19       \\
20 & 5        & 2        & 377.7  & -       & -       & 20     	 &        & 276.0  & 0.0     & 0       & 19       \\
20 & 7        & 2        & 445.5  & 0.0     & 0.0     & 19     	 &        & 321.3  & -       & -       & 20       \\
   \cline{4-7} \cline{9-12}
   &          &          &        &         &         & 179    	 &        &        &         &         & 178      \\ 
\hline \hline
\end{tabular}
\caption{Comparison of the three exact models proposed in this paper and their warm-start variants, for different values of $\Gamma$.}
\label{table:Gamma}
\end{table}

\begin{table}
\centering
\vspace*{-7ex}
\begin{tabular}{lllrrrrcrrrr}
	\hline\hline
   &       &       & \multicolumn{4}{c}{General}                    &  & \multicolumn{4}{c}{General + warm-start}                   \\ 
   \cline{4-7} \cline{9-12}
	$n$  & $\Gamma$ & $\Delta$ & \textit{time}  &  \textit{LBgap} & \textit{UBgap} & $\#$\textit{solv} & & \textit{time}  & \textit{LBgap} & \textit{UBgap} & $\#$\textit{solv}  \\
	\hline\hline
10 & 7     & 0     & 0.3    & -     & -      & 20      &  & 0.3     & -     & -     & 20         \\
10 & 7     & 1     & -      & 100.0   & 0.0    & 0       &  & -       & 100.0   & 0.0   & 0          \\
10 & 7     & 2     & -      & 100.0   & 0.1    & 0       &  & -       & 100.0   & 0.0   & 0          \\
10 & 7     & 3     & -      & 100.0   & 0.0    & 0       &  & -       & 100.0   & 0.0   & 0          \\
15 & 7     & 0     & 3.3    & -     & -      & 20      &  & 3.0     & -     & -     & 20         \\
15 & 7     & 1     & -      & 100.0   & 0.4    & 0       &  & -       & 100.0   & 0.3   & 0          \\
15 & 7     & 2     & -      & 100.0   & 0.4    & 0       &  & -       & 100.0   & 0.3   & 0          \\
15 & 7     & 3     & -      & 100.0   & 0.5    & 0       &  & -       & 100.0   & 0.3   & 0          \\
20 & 7     & 0     & 69.5   & 0.8   & 0.0    & 18      &  & 81.1    & 0.9   & 0.0   & 18         \\
20 & 7     & 1     & -      & 100.0   & 1.1    & 0       &  & -       & 100.0   & 0.7   & 0          \\
20 & 7     & 2     & -      & 100.0   & 1.2    & 0       &  & -       & 100.0   & 0.5   & 0          \\
20 & 7     & 3     & -      & 100.0   & 1.6    & 0       &  & -       & 100.0   & 0.5   & 0          \\
   \cline{4-7} \cline{9-12}
   &       &       &        &       &        & 58      &  &         &       &       & 58         \\
   &       &       & \multicolumn{4}{c}{Matching}                    &  & \multicolumn{4}{c}{Matching + warm-start}                   \\ 
   \cline{4-7} \cline{9-12}
	$n$  & $\Gamma$ & $\Delta$ & \textit{time}  &  \textit{LBgap} & \textit{UBgap} & $\#$\textit{solv} & & \textit{time}  & \textit{LBgap} & \textit{UBgap} & $\#$\textit{solv}  \\
	\hline\hline
10 & 7     & 0     & 0       & -     & -      & 20      &  & 0.0    & -     & -     & 20         \\
10 & 7     & 1     & 2.1     & -     & -      & 20      &  & 2.2    & -     & -     & 20         \\
10 & 7     & 2     & 2.1     & -     & -      & 20      &  & 3.4    & -     & -     & 20         \\
10 & 7     & 3     & 5.3     & -     & -      & 20      &  & 5.9    & -     & -     & 20         \\
15 & 7     & 0     & 0.2     & -     & -      & 20      &  & 0.2    & -     & -     & 20         \\
15 & 7     & 1     & 207     & 0.2   & 0.0    & 11      &  & 162.7  & 0.2   & 0.0   & 12         \\
15 & 7     & 2     & 111.7   & 0.2   & 0.0    & 16      &  & 97.6   & 0.2   & 0.0   & 16         \\
15 & 7     & 3     & 66.1    & 0.2   & 0.0    & 17      &  & 72.7   & 0.2   & 0.0   & 16         \\
20 & 7     & 0     & 1.6     & -     & -      & 20      &  & 1.6    & -     & -     & 20         \\
20 & 7     & 1     & -       & 0.9   & 0.0    & 0       &  & -      & 0.9   & 0.0   & 0          \\
20 & 7     & 2     & -       & 0.5   & 0.0    & 0       &  & -      & 0.5   & 0.0   & 0          \\
20 & 7     & 3     & 395.3   & 0.3   & 0.0    & 8       &  & 490.9  & 0.3   & 0.0   & 8          \\
\cline{4-7} \cline{9-12}
   &       &       &         &       &        & 172     &  &        &       &       & 172        \\ 
   &       &       & \multicolumn{4}{c}{Assignment}                    &  & \multicolumn{4}{c}{Assignment + warm-start}                   \\ 
   \cline{4-7} \cline{9-12}
	$n$  & $\Gamma$ & $\Delta$ & \textit{time}  &  \textit{LBgap} & \textit{UBgap} & $\#$\textit{solv} & & \textit{time}  & \textit{LBgap} & \textit{UBgap} & $\#$\textit{solv}  \\
	\hline\hline
10 & 7     & 0     & 0.0     & -     & -      & 20      &  & 0.0     & -     & -     & 20         \\
10 & 7     & 1     & 2.8     & -     & -      & 20      &  & 2.0     & -     & -     & 20         \\
10 & 7     & 2     & 3.4     & -     & -      & 20      &  & 3.0     & -     & -     & 20         \\
10 & 7     & 3     & 5.7     & -     & -      & 20      &  & 4.1     & -     & -     & 20         \\
15 & 7     & 0     & 0.2     & -     & -      & 20      &  & 0.1     & -     & -     & 20         \\
15 & 7     & 1     & 71.1    & 0.0   & 0.0    & 19      &  & 86.8    & -     & -     & 20         \\
15 & 7     & 2     & 59.1    & -     & -      & 20      &  & 64.3    & -     & -     & 20         \\
15 & 7     & 3     & 42.4    & -     & -      & 20      &  & 50.3    & -     & -     & 20         \\
20 & 7     & 0     & 1.5     & -     & -      & 20      &  & 1.3     & -     & -     & 20         \\
20 & 7     & 1     & 433.5   & 0.4   & 0.0    & 2       &  & 523.5   & 0.3   & 0.0   & 6          \\
20 & 7     & 2     & 445.5   & 0.0   & 0.0    & 19      &  & 321.3   & -     & -     & 20         \\
20 & 7     & 3     & 435.2   & 0.3   & 0.0    & 19      &  & 312.0   & -     & -     & 20         \\ 
\cline{4-7} \cline{9-12}
   &       &       &         &       &        & 219     &  &         &       &       & 226        \\
   \hline \hline
\end{tabular}
\caption{Comparison of the three exact models proposed in this paper and their warm-start variants, for different values of $\Delta$.}
\label{table:Delta}
\end{table}

Figure \ref{fig:models_ppcombined2} shows performance profiles \citep{dolan2002benchmarking} of the relative solution times of the matching and assignment-based models and their warm-start variants, for different instance sizes. The general model and its warm-start variant is excluded from these plots given its poor performance. A performance profile is a graphical comparison of the \textit{performance ratios}. The performance ratio of model $m\in \mathcal{M}$ for instance $i\in \mathcal{I}$ is defined as 
$$p_{im}=\frac{t_{im}}{\min_{m\in \mathcal{M}}t_{im}},$$
where $t_{im}$ is the time required to solve instance $i$ using model $m$. If model $m$ fails to find an optimal solution to instance $i$ within the given time-limit, then $p_{im}=P$, for some $P>\max_{i,m}r_{im}$. The performance profile of model $m\in \mathcal{M}$ is then defined to be the function
$$\rho_{m}(\tau)=\frac{|\{p_{im}\leq \tau:i\in \mathcal{I}\}|}{|\mathcal{I}|},$$
that is, the probability that model $m$ is within a factor $\tau$ of the best performing model. The performance profiles in Figure \ref{fig:models_ppcombined2} have been plotted on the log-scale for clarity.

The top-left performance profile in Figure \ref{fig:models_ppcombined2} includes data from all instances, whilst the three other performance profiles consider the three sizes of instance separately. We see that for $n=10$, the matching-based model performs slightly better than the assignment-based model, however the inclusion of a warm-start does not seem to improve the matching model. For $n=15$ and $n=20$ however, the assignment model is stronger than the matching model. The benefits of a warm-start solution become most apparent when solving the largest instances, where a warm-start increases both solution times and the number of instances solved to optimality of both the matching and assignment model. 

\begin{figure}[htbp]
	\centering
	\includegraphics[scale=0.5]{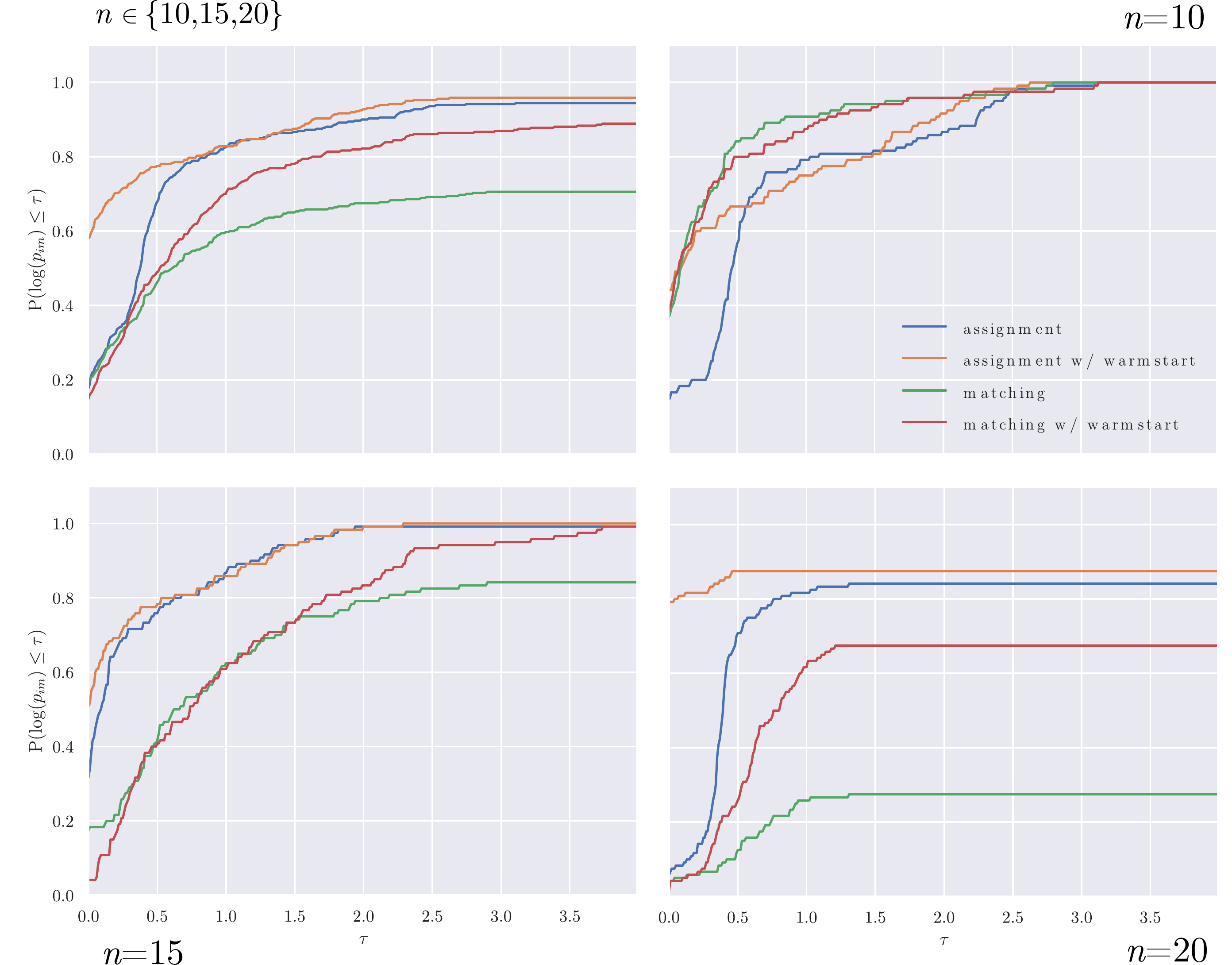}
	\caption{Performance profiles of relative solution times for different instance sizes.}
	\label{fig:models_ppcombined2}
\end{figure}

\subsection{Model parameters}

We now examine the impact of the model parameters $\Gamma$ and $\Delta$ on the objective value. For each set of instances, Tables \ref{table:Gamma_objval} and \ref{table:Delta_objval} report the average objective value of the best known feasible solutions found by any method for different values of $\Gamma$ and $\Delta$ respectively, as well as the relative percentage difference in this average from the sets of instances where $\Gamma=3$ and $\Delta=0$, respectively. 

The results in Table \ref{table:Gamma_objval} show that, as we would expect, increasing the $\Gamma$ increases the average objective value in a concave manner. Table \ref{table:Delta_objval} shows that the inclusion of a second-stage recourse solution provides an improvement in objective value. However we also see that beyond $\Delta=1$, increasing $\Delta$ provides little additional benefit. That is, the vast majority of the benefit of allowing a recourse solution can be captured by allowing just a single swap to the first-stage schedule. However, it is important to note the effect of having been limited to instances sizes of 20 and less by the computational intensity of solving the proposed exact models. We expect that for larger instance sizes, a less restricted and more powerful recourse action, i.e. increasing $\Delta$, would become more advantageous. Additionally, for a discrete budgeted uncertainty set where $p_i\in\{\hat{p}_i,\hat{p}_i+\bar{p}_i\}$ for each $i\in \N$, we might expect the benefits of increasing $\Delta$ to be more apparent, since in this case the adversary is unable to spread the delay across multiple jobs in an attempt to preempt the recourse response, as is currently the case under the continuous budgeted uncertainty set that we consider. The impact of discrete budgeted uncertainty is an interesting possibility for future research on this problem.

\begin{table}[]
\centering
\parbox{.45\linewidth}{
	\centering
\begin{tabular}{lllrr}
\hline \hline
$n$  & $\Gamma$ & $\Delta$ & \textit{avg. best} & $\%$\textit{diff.} \\
\hline
10 & 3     & 2     & 3946.5    & 0.0       \\
10 & 5     & 2     & 4578.0    & 14.1      \\
10 & 7     & 2     & 5053.0    & 22.1      \\
15 & 3     & 2     & 7164.9    & 0.0       \\
15 & 5     & 2     & 8177.5    & 12.7      \\
15 & 7     & 2     & 9002.1    & 20.7      \\
20 & 3     & 2     & 11814.3   & 0.0       \\
20 & 5     & 2     & 13317.8   & 11.5      \\
20 & 7     & 2     & 14582.5   & 19.3      \\ 
\hline\hline
\end{tabular}
\caption{The effects of increasing $\Gamma$ on the average objective value of the best known solution.}
\label{table:Gamma_objval}
}
\hfill
\parbox{.45\linewidth}{
	\centering
\begin{tabular}{lllrr}
\hline \hline
$n$  & $\Gamma$ & $\Delta$ & \textit{avg. best} & $\%$\textit{diff.} \\
\hline
10 & 7     & 0     & 5079.7    & 0.0       \\
10 & 7     & 1     & 5053.0    & -0.5       \\
10 & 7     & 2     & 5053.0    & -0.5       \\
10 & 7     & 3     & 5053.0    & -0.5       \\
15 & 7     & 0     & 9093.8    & 0.0       \\
15 & 7     & 1     & 9002.2    & -1.0       \\
15 & 7     & 2     & 9002.1    & -1.0       \\
15 & 7     & 3     & 9002.1    & -1.0       \\
20 & 7     & 0     & 14735.6   & 0.0       \\
20 & 7     & 1     & 14583.7   & -1.0       \\
20 & 7     & 2     & 14582.5   & -1.0       \\
20 & 7     & 3     & 14582.5   & -1.0       \\ 
\hline\hline
\end{tabular}
\caption{The effects of increasing $\Delta$ on the average objective value of the best known solution.}
\label{table:Delta_objval}
}
\end{table}

\section{Conclusions}\label{section:conclusions}

This paper has introduced a recoverable robust model for the single machine scheduling problem with the total flow time criterion. A general result that allows for the construction of compact formulations for a wide range of recoverable robust problems has been presented, and this approach has been applied to the specific scheduling problem we consider. We have analysed the incremental subproblem of the robust scheduling problem in detail in an attempt to develop more tailored and effective compact formulations for this problem. Specifically, we have proved that matching problems with edge weights of the form of (\ref{obj:thm1}) have integral solutions, and therefore the inclusion of the odd-cycle constraints of the standard matching polytope is unnecessary. This result allows us to derive a matching-based compact formulation for the full recoverable robust single machine scheduling problem. A symmetric assignment-based formulation has also been presented, and we show how the integral matching result can be transferred to this alternative formulation to enable the derivation of a third compact model for this problem. Computational results show that this assignment-based model is the strongest of the three exact models.

There remain a number of promising directions in which future research on this problem can develop. Firstly, in this work we have considered a limited recourse action of allowing $\Delta$ disjoint swaps to be made to the first-stage schedule. Other measures of distance between the first and second-stage solution are certainly possible and worth investigating, especially if the restriction that the swapped pairs be disjoint could be relaxed, and interchanges between the positions of three or more jobs simultaneously can be factored into a recourse action. Another obvious avenue for future research is the analysis of this problem in the context of uncertainty sets different from budgeted uncertainty. Given the vast number of different objective criteria that have been used for single-machine scheduling problems and the unique properties of each, it would be interesting and worthwhile to investigate the application of this recoverable robust model to some of these. As a final suggestion, given the limited size of instance that have been solved by the exact models we propose, an accurate and effective heuristic approach for solving large-scale instances of this problem would certainly be a valuable development.

\appendix

\section*{Acknowledgements}

The authors would like to thank the reviewers for the part they played in the improvement of this paper with their constructive and insightful feedback. The authors are also grateful for the support of the EPSRC-funded (EP/L015692/1) STOR-i Centre for Doctoral Training.

\bibliography{paper}

\section{Omitted proofs}\label{section:proofs}

\begin{proof}[Proof of Theorem~\ref{theoremlp}]
	Let an instance $I$ of problem (\ref{assign1})-(\ref{assign3}) be given, and define 
	\[ P = \{ \bm{y}\in \mathbb{R}_+^{n\times n} : \eqref{inc2}-\eqref{inc5} \}. \]
	To solve $I$, we construct a graph with a single node for each job $i\in \N$, where an edge between nodes $i$ and $j$ indicates that jobs $i$ and $j$ swap their positions from the first-stage schedule. Since edges correspond to unique swaps, the set of edges in this graph is given by $\cE=\{(i,j):i,j\in \N,\, j>i\}$. The weight of an edge $(i,j)$ in this graph is equal to the reduction in objective cost from making the corresponding swap, i.e. $a_ib_i+a_jb_j-a_ib_j-a_jb_i=(a_i-a_j)(b_i-b_j)$. We aim to choose up to $\Delta$ edges from this graph to maximise the reduction in objective cost. That is, given $I$, we construct an instance $J$ of the following cardinality-constrained matching problem:`
\begin{align}
	\min\ &\sum_{i\in \N}a_ib_i-\sum_{(i,j)\in \cE}(a_i-a_j)(b_i-b_j)z_{ij}\\
	\text{s.t. } & \sum_{(i,j)\in \cE} z_{ij} + \sum_{(j,i)\in \cE}z_{ij} \leq 1 & \forall i \in \N\label{matchingconstr1}\\
	      & \sum_{(i,j)\in \cE} z_{ij}\leq \Delta\\
	      & z_{ij} \geq 0 & \forall (i,j) \in \cE\label{matchingconstr3}.
\end{align}
As stated in Corollary \ref{cor:ccmatching}, this problem has an integral optimal solution. Hence, letting $P'=\{ \bm{z} \in \mathbb{R}_{+}^{|\cE|} : \eqref{matchingconstr1}-\eqref{matchingconstr3}\}$, we construct mappings $\phi:P\rightarrow P^{'}$ and $\phi^{-1}:P^{'}\rightarrow P$ which preserve objective value and integrality, showing problems $I$ and $J$ are indeed equivalent. To this end, we define $\phi(\bm{y})=\bm{z}$ by $z_{ij}=y_{ij}$ for all $(i,j)\in \cE$. Observe that
\begin{align*}
obj_{I} (\bm{y}) &= \sum_{i\in \N}\sum_{j\in \N} a_ib_jy_{ij}\\
&=\sum_{i\in \N}a_ib_iy_{ii} + \sum_{i\in \N}\sum_{j\in \N:j\neq i}a_ib_jy_{ij}\\
&=\sum_{i\in \N}a_ib_iy_{ii} + \sum_{i\in \N}\sum_{j\in \N:j>i}(a_ib_j+a_jb_i)y_{ij}\\
&=\sum_{i\in \N}a_ib_i\Bigg(1-\sum_{j\in \N:j\neq i}y_{ij}\Bigg) + \sum_{i\in \N}\sum_{j\in \N:j>i}(a_ib_j+a_jb_i)y_{ij}\\
&=\sum_{i\in \N}a_ib_i - \sum_{i\in \N}\sum_{j\in \N:j\neq i}a_ib_iy_{ij} + \sum_{i\in \N}\sum_{j\in \N:j>i}(a_ib_j+a_jb_i)y_{ij}\\
&=\sum_{i\in \N}a_ib_i - \sum_{i\in \N}\sum_{j\in \N:j> i}(a_ib_i+a_jb_j)y_{ij} + \sum_{i\in \N}\sum_{j\in \N:j>i}(a_ib_j+a_jb_i)y_{ij}\\
&=\sum_{i\in \N}a_ib_i - \sum_{i\in \N}\sum_{j\in \N:j> i}(a_ib_i+a_jb_j-a_ib_j-a_jb_i)y_{ij}\\
&=\sum_{i\in \N}a_ib_i - \sum_{(i,j)\in \cE}(a_i-a_j)(b_i-b_j)z_{ij}\\
&=obj_{J}(\bm{z}).
\end{align*}
Conversely, we define $\phi^{-1}(\bm{z})=\bm{y}$ with $y_{ij}=y_{ji}=z_{ij}$ for each $(i,j)\in \cE$, and $y_{ii}=(1-\sum_{j\in \N:j>i}z_{ij})(1-\sum_{j\in \N:j<i}z_{ji})$ for each $i\in \N$. Then 

\begin{align*}
	obj_{J}(\bm{z}) &=\sum_{i\in \N}a_ib_i - \sum_{(i,j)\in \cE}(a_i-a_j)(b_i-b_j)z_{ij}\\
			&=\sum_{i\in \N}a_ib_i-\sum_{i\in \N}\sum_{j\in \N:j>i}(a_ib_i+a_jb_j)z_{ij}+\sum_{i\in \N}\sum_{j\in \N:j>i}(a_ib_j+a_jb_i)z_{ij}\\
			&=\sum_{i\in \N}a_ib_i-\sum_{i\in \N}a_ib_i\sum_{j\in \N:j>i}z_{ij}-\sum_{i\in \N}a_ib_i\sum_{j\in \N:j<i}z_{ji}\\
			&&\hspace{-9cm}	+\sum_{i\in \N}a_ib_i\underbrace{\sum_{j\in \N:j>i}z_{ij}\sum_{j\in \N:j<i}z_{ji}}_\text{$=0$}+\sum_{i\in \N}\sum_{j\in \N:j>i}(a_ib_j+a_jb_i)z_{ij}\\
			&=\sum_{i\in \N}a_ib_i\Bigg(1-\sum_{j\in \N:j>i}z_{ij}\Bigg)\Bigg(1-\sum_{j\in \N:j<i}z_{ji}\Bigg)+\sum_{i\in \N}\sum_{j\in \N:j>i}(a_ib_j+a_jb_i)z_{ij}\\
			&=\sum_{i\in \N}a_ib_iy_{ii}+\sum_{i\in \N}\sum_{j\in \N:j>i}a_ib_jy_{ij}+\sum_{i\in \N}\sum_{j\in \N:j>i}a_jb_iy_{ji}\\
			&=\sum_{i\in \N}a_ib_iy_{ii}+\sum_{i\in \N}\sum_{j\in \N:j\neq i}a_ib_jy_{ij}\\
			&=\sum_{i\in \N}a_ib_jy_{ij}\\
			&=obj_{I}(\bm{y}).
\end{align*}
Therefore $\min_{\bm{y}\in P} obj_{I}(\bm{y}) = \min_{\bm{z}\in P'} obj_J(\bm{z})$. Since problem (\ref{matchinc1})-(\ref{matchinc4}) has an optimal integral solution, there must also be an optimal integral solution to $I$ via the mapping $\phi^{-1}$, proving the claim.
\end{proof}

\begin{proof}[Proof of Theorem~\ref{prop:dominance}]
To prove this, it will suffice to show that $P(F_m) \subsetneq P(F_a)$, where $P(F_m)$ and $P(F_a)$ denote the sets of feasible solutions to the linear relaxations of the non-linear matching-based and assignment-based formulations respectively, i.e. that for each $(\bm{x},\, \bm{z},\, \bm{q})\in P(F_m)$, there exists a $\bm{y}$ such that $(\bm{x},\, \bm{y},\, \bm{q})\in P(F_a)$. Note that the same vector $\bm{q}$ is used in both formulations, which results in the objective values being the same. To this end, let $\bm{z}$ be part of a feasible solution to the matching formulation, and define the transformation $\phi(\bm{z}) = \bm{y}$ by $y_{ij}=y_{ji}=z_{ij}$ for $(i,j)\in \cE=\{(i,j)\in \mathcal{N}\times\mathcal{N}:i<j\}$ and $y_{ii}=1-\sum_{j\in \N:j>i}z_{ij}-\sum_{j\in \N:j<i}z_{ji}$ for $i\in \N$.

Firstly, observe that the assignment constraints (\ref{assignment_model:assignment_constr1}) and (\ref{assignment_model:assignment_constr2}) are satisfied by this definition of $\bm{y}$. For each $j\in \N$ we have that
\begin{align*}
\sum_{i\in \N}y_{ij}&=\sum_{i\in \N:i<j}y_{ij} + \sum_{i\in \N:i>j}y_{ji} + y_{ii}\\
&=\sum_{i\in \N:i<j}z_{ij} + \sum_{i\in \N:i>j}z_{ji} + 1-\sum_{i\in \N:i<j}z_{ij}-\sum_{i\in \N:i>j}z_{ji} = 1. 
\end{align*}
The same can be shown for the `incoming' assignment constraints for each $i\in \N$.

Now observe that constraint (\ref{matching_model:constr3}) states that 
$$\sum_{i\in \N}\sum_{j\in \N:j>i}z_{ij}\leq \Delta,$$
or equivalently
$$\sum_{i\in \N}\sum_{j\in \N:j>i}y_{ij}\leq \Delta.$$
Since, $y_{ij}=y_{ji}$ for all $i,\,j\in \N$, we have that
$$\sum_{i\in \N}\bigg(\sum_{j\in \N:j>i}y_{ij}+\sum_{j\in \N:j<i}y_{ij}\bigg)\leq 2\Delta,$$
and therefore as a consequence of the assignment constraints, which imply
$$\sum_{i\in \N}\sum_{j\in \N}y_{ij} = \sum_{i\in \N}\bigg(\sum_{j\in \N:j>i}y_{ij}+\sum_{j\in \N:j<i}y_{ij}+y_{ii}\bigg)=n,$$
we have that
$$\sum_{i\in \N}y_{ii}=n - \sum_{i\in \N}\bigg(\sum_{j\in \N:j>i}y_{ij}+\sum_{j\in \N:j<i}y_{ij}\bigg) \geq n-2\Delta.$$
This is exactly constraint (\ref{assignment_model:constr1}) from the assignment-based formulation.

Finally, consider constraints (\ref{matching_model:constr1}):
\begin{align*}
\sum_{m\in\cM} a_{mi}q_m +&\sum_{j\in \N:j>i}\Bigg(\sum_{\ell\in \N}\ell\cdot x_{j\ell}-\sum_{\ell\in \N}\ell\cdot x_{i\ell}\Bigg)z_{ij}\\
&&\hspace{-60mm}-\sum_{j\in \N:j<i}\Bigg(\sum_{\ell\in \N}\ell \cdot x_{i\ell}-\sum_{\ell\in \N}\ell\cdot x_{j\ell}\Bigg)z_{ji}\geq (n+1-\sum_{\ell \in \N}\ell\cdot x_{i\ell})\quad\forall i\in \N.
\end{align*}
These can be rewritten as
\begin{align*}
\sum_{m\in\cM} a_{mi}q_{m} &\geq (n+1-\sum_{\ell\in \N}\ell\cdot x_{i\ell})+\sum_{j\in \N: j<i}z_{ji}\sum_{\ell\in\N}\ell\cdot x_{i\ell}+\sum_{j\in \N: j>i}z_{ij}\sum_{\ell\in\N}\ell\cdot x_{i\ell}\\
&\hspace{4cm}-\sum_{j\in \N:j<i}z_{ji}\sum_{\ell\in \N}\ell\cdot x_{j\ell}-\sum_{j\in \N:j>i}z_{ij}\sum_{\ell\in \N}\ell\cdot x_{j\ell} \\
&= (n+1-\sum_{\ell\in \N}\ell\cdot x_{i\ell})+\sum_{j\in \N: j<i}y_{ij}\sum_{\ell\in\N}\ell\cdot x_{i\ell}+\sum_{j\in \N: j>i}y_{ij}\sum_{\ell\in\N}\ell\cdot x_{i\ell}\\
&\hspace{4cm}-\sum_{j\in \N:j<i}y_{ij}\sum_{\ell\in \N}\ell\cdot x_{j\ell}-\sum_{j\in \N:j>i}y_{ij}\sum_{\ell\in \N}\ell\cdot x_{j\ell} \\
&=  (n+1-\sum_{\ell\in \N}\ell\cdot x_{i\ell})+\sum_{j\in \N: j\neq i}y_{ij}\sum_{\ell\in\N}\ell\cdot x_{i\ell}-\sum_{j\in \N:j\neq i}y_{ij}\sum_{\ell\in \N}\ell\cdot x_{j\ell} \\
&=  n+1-\bigg(1-\sum_{j\in \N: j\neq i}y_{ij}\bigg)\sum_{\ell\in\N}\ell\cdot x_{i\ell}-\sum_{j\in \N:j\neq i}y_{ij}\sum_{\ell\in \N}\ell\cdot x_{j\ell}\\
&= n+1 - y_{ii}\sum_{\ell\in\N}\ell\cdot x_{i\ell}-\sum_{j\in \N:j\neq i}y_{ij}\sum_{\ell\in \N}\ell\cdot x_{j\ell}\\
&= n+1 - \sum_{j\in \N}y_{ij}\sum_{\ell\in \N}\ell\cdot x_{j\ell}\\
&= (n+1)\sum_{j\in\N} y_{ij} - \sum_{j\in \N}y_{ij}\sum_{\ell\in \N}\ell\cdot x_{j\ell}\\
& = \sum_{j\in \N}\big(n+1-\sum_{\ell\in\N}\ell\cdot x_{j\ell}\big)y_{ij},
\end{align*}
for each $i\in \N$ which are constraints (\ref{assignment_model:constr3}) from the assignment formulation.

Hence, every feasible solution to the non-linear matching-based formulation has a corresponding feasible solution for the non-linear assignment-based formulation, i.e. $P(F_m) \subseteq P(F_a)$. The examples used in the proof of Theorem \ref{thm:incomparable_formulations} show that there exist instances for which the LP bound of the matching formulation is strictly larger than that of the assignment formulation, demonstrating that $P(F_m) \subset P(F_a)$.
\end{proof}

\section{Complete formulations}\label{section:appendix}

\subsection{General model}\label{sec:model1}

The complete linear compact formulation for the general recoverable robust model presented in Section \ref{sec:general} is as follows:
\begin{align}
\min\ & \sum_{m\in\cM} b_mq_m \label{generalfull:first} \\
\text{s.t. } & \sum_{k\in\cK} \mu_k = 1 \\
	     & \sum_{m\in\cM} a_{mi} q_m \ge \sum_{k\in\cK} \left(\sum_{j\in \N}(n+1-j)\sum_{i'\in \N}h^k_{ii'j}\right) & \forall i\in \N \\
& \sum_{i'\in \N} z^k_{ii'} = 1 & \forall i\in \N, \, k\in\cK \\
& \sum_{i\in \N} z^k_{ii'} = 1 & \forall i'\in \N, \, k\in\cK  \\
& z^k_{ii'} = z^k_{i'i} & \forall i,i'\in \N, \, k\in\cK  \\
& \sum_{i\in \N} z^k_{ii} \ge n - 2\Delta & \forall k\in\cK\\
& \sum_{j\in \N} x_{ij} = 1 & \forall i\in \N \\
& \sum_{i\in \N} x_{ij} = 1 & \forall j\in \N \\ 
& w^k_{ii'j} \leq z^k_{ii'} & \forall i,i',j\in \N, \, k\in\cK\\
& w^k_{ii'j} \leq x_{i'j} & \forall i,i',j\in \N, \, k\in\cK\\
& w^k_{ii'j} \geq z^k_{ii'} + x_{i'j} -1 & \forall i,i',j\in \N, \, k\in\cK\\
& h^k_{ii'j} \leq w^k_{ii'j} & \forall i,i',j\in \N, \, k\in\cK\\
& h^k_{ii'j} \leq \mu_k & \forall i,i',j\in \N, \, k\in\cK\\
& h^k_{ii'j} \geq \mu_k + w^k_{ii'j} - 1 & \forall i,i',j\in \N, \, k\in\cK\\
& w^k_{ii'j} \in \{0,1\} &\forall i,i',j\in \N, \, k\in\cK\\
& h^k_{ii'j} \geq 0 & \forall i,i',j\in \N, \, k\in\cK\\
& \mu_k \geq 0 & \forall k\in \cK\\
& q_m \geq 0 &\forall m\in\cM\\
& z_{ii'}^k\in \{0,1\} & \forall i,\,i' \in \N,\,k\in\cK\\
& x_{ij}\in \{0,1\}& \forall i,\,j\in \N. \label{generalfull:last}
\end{align}

\subsection{Matching-based model}\label{sec:model2}

The fully-linearised formulation of the matching-based model presented in Section \ref{subsection:matching_formulation} is as follows:
\begin{align}
	\min_{\bm{x},\,\bm{z},\,\bm{u},\,\bm{v},\, \bm{q}}\ &\sum_{m\in\cM} b_mq_m\label{matchingfull:first}\\
	\text{s.t. }&\sum_{(i,j)\in \cE}z_{ij}+\sum_{(j,i)\in \cE}z_{ji}\leq 1 & \forall i\in \N \\
		    &\sum_{(i,j)\in \cE}z_{ij}\leq \Delta\label{matchingfull:delta_constr}\\
		    &\sum_{m\in\cM} a_{mi}q_m+\sum_{(i,j)\in \cE}\Bigg(\sum_{\ell\in \N}\ell\cdot v_{ij\ell}-\sum_{\ell\in \N}\ell\cdot u_{ij\ell}\Bigg) \nonumber\\
		    &\hspace{0.5cm}-\sum_{(j,i)\in \cE}\Bigg(\sum_{\ell\in \N}\ell\cdot v_{ji\ell}-\sum_{\ell\in \N}\ell\cdot u_{ji\ell}\Bigg) \geq (n+1-\sum_{\ell \in \N}\ell\cdot x_{i\ell})&\hspace{-2cm}\forall i\in \N\label{matchingfull:constr}\\
		    &\sum_{i\in \N}x_{i\ell} = 1&\forall \ell \in \N\\
		    &\sum_{\ell\in \N}x_{i\ell} = 1&\forall i \in \N\\
	&u_{ij\ell}\leq x_{i\ell} &\hspace{-5cm}\forall (i,j)\in \cE,\,\ell\in \N\label{matchingfull:u1}\\
	&u_{ij\ell}\leq z_{ij} &\hspace{-5cm}\forall (i,j)\in \cE,\,\ell\in \N\\
	&u_{ij\ell}\geq z_{ij} + x_{i\ell} - 1 &\hspace{-5cm}\forall (i,j)\in \cE,\,\ell\in \N\label{matchingfull:u3}\\
	&v_{ij\ell}\leq x_{j\ell} &\hspace{-5cm}\forall (i,j)\in \cE,\,\ell\in \N\label{matchingfull:v1}\\
	&v_{ij\ell}\leq z_{ij} &\hspace{-5cm}\forall (i,j)\in \cE,\,\ell\in \N\\
	&v_{ij\ell}\geq z_{ij} + x_{j\ell} -1 &\hspace{-5cm}\forall (i,j)\in \cE,\,\ell\in \N\label{matchingfull:v3}\\
	&u_{ij\ell}\geq0 &\hspace{-5cm}\forall (i,j)\in \cE,\,\ell \in \N\\
	&v_{ij\ell}\geq0 &\hspace{-5cm}\forall (i,j)\in \cE,\,\ell \in \N\\
	&q_m\ge 0 & \hspace{-5cm} \forall m\in\cM\\
	&z_{ij}\geq 0&\hspace{-5cm}\forall (i,j)\in \cE\\
	&x_{i\ell}\in\{0,1\}&\hspace{-5cm}\forall i,\ell\in \N\label{matchingfull:last}.
\end{align}

\subsection{Assignment-based model}\label{sec:model3}
The fully-linearised formulation of the assignment-based model derived in Section \ref{subsection:assignment_formulation} is as follows:

\begin{align}
	\min_{\bm{x},\,\bm{y},\,\bm{w},\,\bm{q}}\ & \sum_{m\in\cM} b_mq_m \label{assignmentfull:first}\\
	\text{s.t. } & \sum_{i\in \N} y_{ij} = 1 & \forall j\in \N\label{assignmentfull:assignment1}\\
		     & \sum_{j\in \N} y_{ij} = 1 & \forall i\in \N\label{assignmentfull:assignment2}\\
		     & \sum_{i\in \N} y_{ii} \ge n-2\Delta \label{assignmentfull:delta_constr}\\
& y_{ij} = y_{ji} & \forall i,j\in \N \\
& \sum_{m\in\cM}a_{mi}q_m \ge \sum_{j\in \N} \Big((n+1)y_{ij}-\sum_{\ell\in \N}\ell\cdot w_{ij\ell}\Big) & \forall i\in \N\label{assignmentfull:constr}\\
& \sum_{i\in \N} x_{i\ell} = 1 & \forall \ell\in \N\\
& \sum_{\ell\in \N} x_{i\ell} = 1 & \forall i\in \N\\
& w_{ij\ell}\leq x_{j\ell} & \forall i,\,j,\,\ell \in \N\label{assignmentfull:w1}\\
& w_{ij\ell}\leq y_{ij} & \forall i,\,j,\,\ell \in \N\\
& w_{ij\ell}\geq x_{j\ell}+y_{ij}-1 & \forall i,\,j,\,\ell \in \N\label{assignmentfull:w3}\\
& w_{ij\ell}\geq 0 & \forall i,\,j,\,\ell \in \N\\
& q_m\geq 0 & \forall m\in\cM \\
& y_{ij} \ge 0 & \forall i,\,j\in \N\\
& x_{i\ell} \in \{0,1\} & \forall i,\,\ell\in \N.\label{assignmentfull:last}
\end{align}

\end{document}